\newtheorem{teor}{Theorem}[section]
\newtheorem{defin}[teor]{Definition}
\newtheorem{lemm}[teor]{Lemma}
\newtheorem{osse}[teor]{Remark}
\newtheorem{remark}[teor]{Remark}
\newtheorem{prop}[teor]{Proposition}
\newtheorem{defi}[teor]{Definition}
\newtheorem{coro}[teor]{Corollary}
\newtheorem{prob}[teor]{Problem}
\newcommand{\bele}{\begin{lemm}\begin{sl}}
\newcommand{\enle}{\end{sl}\end{lemm}}
\newcommand{\bedef}{\begin{defi}\begin{sl}}
\newcommand{\eddef}{\end{sl}\end{defi}}
\newcommand{\bete}{\begin{teor}\begin{sl}}
\newcommand{\ente}{\end{sl}\end{teor}}
\newcommand{\beos}{\begin{osse}\begin{sl}}
\newcommand{\eddos}{\end{sl}\end{osse}}
\newcommand{\bepr}{\begin{prop}\begin{sl}}
\newcommand{\empr}{\end{sl}\end{prop}}
\newcommand{\bepro}{\begin{prob}\begin{rm}}
\newcommand{\empro}{\end{rm}\end{prob}}
\newcommand{\bede}{\begin{defin}\begin{sl}}
\newcommand{\edde}{\end{sl}\end{defin}}
\newcommand{\beco}{\begin{coro}\begin{sl}}
\newcommand{\enco}{\end{sl}\end{coro}}
\newcommand{\beeq}[1]{\begin{equation}\label{#1}}
\newcommand{\eddeq}{\end{equation}}
\newcommand{\beeqa}[1]{\begin{eqnarray}\label{#1}}
\newcommand{\eddeqa}{\end{eqnarray}}
\newcommand{\beal}[1]{\begin{align}\label{#1}}
\newcommand{\eddal}{\end{align}}
\newcommand{\bespl}[1]{\begin{split}\label{#1}}
\newcommand{\edspl}{\end{split}}
\newcommand{\bega}[1]{\begin{gather}\label{#1}}
\newcommand{\edga}{\end{gather}}
\newcommand{\beeqax}{\begin{eqnarray*}}
\newcommand{\eddeqax}{\end{eqnarray*}}
\def\qed{\ifmmode 
  \else \leavevmode\unskip\penalty9999 \hbox{}\nobreak\hfill
  \fi
  \quad\hbox{\hskip.5em\vrule width.4em height.6em depth.05em\hskip.1em}}
\def\endproofsym{\qed}
\renewenvironment{proof}[1][Proof]{\trivlist\item[\hskip\labelsep{\hskip0pt
    {\normalfont\scshape#1.}\hskip .321429\parindent}]\ignorespaces}
{\endproofsym\endtrivlist}
\def\endnobox{\def\endproofsym{}\end{proof}\def\endproofsym{\qed}}
\newcommand{\beeqao}{\begin{eqnarray}\no}
\newcommand{\bealo}{\begin{align}\no}
\newcommand{\besplo}{\begin{split}\no}
\newcommand{\begao}{\begin{gather}\no}
\newcommand{\dd}{\, \mathrm{d}}
\DeclareMathOperator{\dive}{div}
\newcommand{\calH}{{\cal H}}
\newcommand{\calR}{{\cal R}}
\newcommand{\uu}{{\bf u}}
\definecolor{ddmagenta}{rgb}{0.7,0,0.9}
\definecolor{ddorange}{rgb}{1,0.5,0}
\definecolor{Turk}{rgb}{0,0.7,0.4}
\newenvironment{mt}{\color{ddorange}}{\color{black}}
\newcommand{\bmt}{\begin{mt}}
\newcommand{\emt}{\end{mt}}
\newcommand{\R}{\mathbb{R}}
\newcommand{\N}{\mathbb{N}}
\newcommand{\bbK}{\mathbb{K}}
\def\calD{{\mathcal D}} \def\calE{{\mathcal E}} \def\calF{{\mathcal F}}
\def\calG{{\mathcal G}} \def\calH{{\mathcal H}} 
\def\calJ{{\mathcal J}}  \def\calL{{\mathcal L}}
 \def\calQ{{\mathcal Q}} \def\calR{{\mathcal R}}
\def\calS{{\mathcal S}}  
 \def\calW{{\mathcal W}} 
 \def\calZ{{\mathcal Z}}
\newcommand{\Sph}{\mathbf{U}}
\newcommand{\Spu}{\mathbf{H}}
\newcommand{\Spv}{\mathbf{V}}
\newcommand{\Spm}{\mathbf{M}}
\newcommand{\Spz}{\mathbf{Z}}
\newcommand{\Sps}{\mathbf{S}}
\newcommand{\Spx}{\mathbf{X}}
\numberwithin{equation}{section}
\begin{document}

\title[Gradient  system  in damage coupled with plasticity]{
A rate-independent gradient  system  in damage coupled with plasticity  via structured strains
}\thanks{The work of Elisabetta Rocca was
supported by the FP7-IDEAS-ERC-StG Grant \#256872
(EntroPhase), by GNAMPA (Gruppo Nazionale per l'Analisi Matematica, 
la Probabilit\`a e le loro Applicazioni) of INdAM (Istituto Nazionale di Alta Matematica). Elena Bonetti and Riccarda Rossi were
partially supported by a MIUR-PRIN'10-'11 grant for the project
``Calculus of Variations'',  and by GNAMPA (Gruppo Nazionale per l'Analisi Matematica, 
la Probabilit\`a e le loro Applicazioni)  of  
INdAM (Istituto Nazionale di Alta Matematica),  and  by IMATI -- C.N.R. Pavia. 
Marita Thomas was partially supported by 
the German Research Foundation (DFG) within CRC 1114, project C05. \\
 \textrm{{\bf Keywords:} rate-independent systems, tensorial damage model, 
anisotropic damage, plasticity, structured strain, energetic solutions, existence results.\\
{\bf MSC2010:} 74C05 74E10 74R05 74R20 49S05 49J40 35K86. 
}
}
%
\author{Elena Bonetti}\address{Dipartimento di Matematica, Universit\`a degli Studi di Pavia, Via Ferrata~1, I-27100, Pavia, Italy. E-mail: elena.bonetti@unipv.it}
\author{Elisabetta Rocca}\address{Weierstrass Institute for Applied
Analysis and Stochastics, Mohrenstr.~39, D-10117 Berlin, Germany and
Dipartimento di Matematica, Universit\`a degli Studi di Milano, Via Saldini 50, I-20133 Milano, Italy.
E-mail: rocca@wias-berlin.de and elisabetta.rocca@unimi.it}
\author{Riccarda Rossi}\address{DIMI, Universit\`{a} di Brescia, Via Branze 38, I-25100 Brescia, Italy.
E-mail: riccarda.rossi@unibs.it}
\author{Marita Thomas}\address{Weierstrass Institute for Applied
Analysis and Stochastics, Mohrenstr.~39, D-10117 Berlin, Germany. E-mail: marita.thomas@wias-berlin.de}
%
%
\begin{abstract}
This contribution deals with a class of models combining 
isotropic damage with plasticity. 
It has been inspired by  a work by Freddi and Royer-Carfagni \cite{FRC}, including 
the case where the 
inelastic part of the strain only evolves in regions where the material is damaged. 
The evolution both of the damage and of  the plastic variable is assumed to be rate-independent. 
Existence of solutions is established in the abstract energetic framework elaborated 
by Mielke and coworkers (cf., e.g., \cite{Miel05ERIS, Miel08DEMF}).
\end{abstract}
\maketitle
\section*{Introduction}
 It is well known that damage in a material can be interpreted as a degradation of 
its elastic properties due to the failure of its microscopic structure.  
Such macroscopic mechanical effects take their origin from the 
formation of micro-cracks and cavities at a microscopic scale. 
Macroscopically, these degeneracy effects may be described by the incorporation of an internal  
variable into the model, the damage parameter, 
which in particular features a decrease of stiffness with ongoing damage.  
However, some materials show a more complex behavior, 
possibly presenting different responses to traction and compression loading, or exhibiting  
some  plastic-like  behavior when the 
damage process is activated.
\par
The study of plastic material behavior at small strains in itself has a long tradition, 
cf.\ e.g.\ \cite{Hill50,Lub90Plast}, 
and numerous analytical and numerical results exist, 
cf.\ e.g.\ \cite{Tema85MPP,HanRed99PMTN,RaDeGa08ADMB,Knee09SNGS,Knee10GSRC,
JiRoZe13LAVB,BaMiRo12QSSP,BaRo08TVPS,DMSca14QEPP,DMDSMoMo08GSQE,DMDSMo06QEPL}.
Also isotropic damage in itself nowadays is a well-investigated phenomenon and 
it has been treated in the spirit of phase-field  theories from 
the point of view of modeling, analysis,  and computations, cf.\ e.g.\ 
\cite{FreNed96DGDP,BoSch,bss,MiRou06,ThoMie09DNEM,FiKnSt10YMQS,KnRoZa13VVAR,
FraGar06VVPB,GarLar09TBQS,MiRoZe10CDEV,JirZem15LSRV,Giac05ATAQ,PhaMar13ODRC,DMIur13FMGL}. 
In this family of models,  a scalar internal variable is introduced 
to denote the local proportion of active micro-bonds vs.\ the damaged ones. 
Nonetheless, this approach does not permit to distinguish different 
anisotropic behaviors and the appearance of an unknown 
transformation strain, as it occurs in plasticity. Thus, it is of some interest to combine  
scalar and  tensorial variables to describe both of these two effects. 
\par
More precisely, in this contribution, we assume that a ``transition strain'', 
a \emph{structured strain} as it is called in \cite{FRC}, may appear and evolve during 
the damage process. The latter in itself decreases the stiffness of the material during the its evolution. 
The first effect makes our model akin to 
 a plasticity model, 
in which the plastic strain 
is activated through damage and its norm depends on the damage level; we refer to \cite{AlMaVi14GDMC}
for an alternative model for damage coupled with plasticity,  
recently analyzed in \cite{Crismale, Crismale-Lazzaroni}.  
As a consequence, we deal with two internal variables: a scalar one $\chi$, standardly 
denoting the local proportion of active bonds in the micro-structure of the material, 
and a tensorial one $D$, which stands for the transformation 
strain arising during the damage evolution. The behavior of these two variables is 
recovered by a generalization of the principle of 
virtual powers, in which micro-forces 
responsible for the  formation of micro-cracks and micro-slips  are included; 
we confine the discussion to 
the 
small-strain regime   and the isothermal case, though.
The momentum balance equation for the displacement $\mathbf{u}$ 
is eventually written in the quasi-static case, while 
the evolution of the internal variables $\chi$ and $D$ is governed by 
an energy functional and a $1$-homogeneous dissipation potential, leading to 
a rate-independent evolution of these variables and  possibly  including irreversibility constraints.  
\par
All in all, the resulting PDE system in the variable $\mathbf{q} = (\mathbf{u},\chi,D)$ 
pertains to the class of abstract gradient systems 
of the form
\begin{equation}
\label{gr_sys}
\partial\calR (\mathbf{q}_t) + \mathrm{D} \calE(t,\mathbf{q}(t)) \ni 0 \quad \text{in } (0,T),
\end{equation}
driven by an energy functional $\calE$ and a dissipation potential $\calR$, positively homogeneous of 
degree $1$ and only acting on the dissipative variables $(\chi,D)$. For the analysis of this system, 
we will resort to the \emph{energetic formulation} for rate-independent systems
developed by Mielke and coworkers, cf.\ \cite{MiTh04RIHM,Miel05ERIS,MaiMie05EREM,Miel08DEMF}. We will thus prove  the existence of 
energetic solutions by applying an abstract  existence result from \cite{Miel08DEMF}. 
\paragraph{\bf Plan of the paper.}
 The derivation of the model will be carried out in Section \ref{Conti}. 
The precise mathematical assumptions are collected in Section \ref{Ass}. 
The existence theorem (Thm.\ \ref{EnSolanisoEx}) is stated in Section \ref{FullyRI} 
in the framework of energetic solutions.  
Finally, its proof is carried out in Section \ref{AnaProp}. 
%
\section{Continuum mechanical derivation of the model}
\label{Conti}
%
Along a time-interval $[0,T],$ we study the mechanical behavior of a body, 
occupying a domain $\Omega\subset\R^d,$ $1<d\in\N$. The body is  exposed to 
time-dependent external loadings, which
possibly cause a  degradation of the micro-structure of the material, 
leading to  inelastic  responses. In particular, we restrict ourselves to a small-strain regime
and introduce the vector    $\uu$ of small displacements. 
Hence, as already mentioned in the introduction, we shall formulate the model 
in terms of the strain and in terms of two further state variables $\chi:[0,T]\times\Omega\to[0,1]$ 
and $D:[0,T]\times\Omega\to\R^{d\times d}$,  
which are internal variables  more specifically related to  the description of damage 
and plastic-like behavior.  
 Accordingly, in view of   the conjugate approach, the free energy will depend on the strain 
and on these two internal variables, and the stress shall be derived in terms of them. 
 \par
  In particular,
using the approach of \cite{FRC}, we first suppose that the symmetric gradient 
of the displacement $\uu$ 
$$
e(\uu)=(\nabla\uu+\nabla^\top\uu)/2
$$
is decomposed in two parts: 
\begin{equation}\label{symu}
e(\uu)=E_\mathrm{el}+\Xi
\end{equation}
where   $E_\mathrm{el} \in \R^{d\times d}$  represents the {\em elastic} part of the 
strain and   $\Xi\in \Sps\subset\R^{d\times d}$  the {\em inelastic} one, 
 associated with the formation of micro-cracks or micro-slips.  
 It is
 indeed 
 known that (see, e.g., 
 \cite{Kacha90ICDM,Fre02}) for an inelastic body the strain is determined by the stress 
and by some additional (internal) variable, which may be
interpreted within the framework of a general plasticity theory. 
In this spirit, the set 
$\Sps\subset\R^{d\times d}$  can, e.g., be the subspace of symmetric matrices 
or, as in plasticity theory, 
the subspace of  deviatoric  (i.e., trace-free) $\R^{d\times d}$-matrices. 
In order to allow for the treatment of different types of inelastic phenomena we keep 
$\Sps\subset\R^{d\times d}$ general, 
and refer to 
Remark \ref{comparison-with-FRC} below for more details on specific choices of $\Sps$ 
and their meaning.   
\par
 In the context of this damage model, 
 we prescribe that the inelastic part of the strain depends on the state of the internal 
bonds acting at a microscopic level in the material.
We also assume that the phenomenon of damage is {\em progressive}, in the sense that within 
the same body there may be regions where the material is completely damaged and  
regions  where the microstructure is lost, but not yet failed. 
As it is common in the modeling of isotropic damage, 
the variable $\chi$ is therefore linked to the proportion of active or  
inactive bonds in a neighborhood of material-dependent size (representative volume element) centered around
any  material point $x\in\Omega$. Hence, $\chi$ takes values in the interval $[0,1]$. 
Throughout this work we will assume that $\chi$ stands for   
 the proportion of \emph{active bonds} at the micro scale in the material, thus,  with  the  value $1$ 
in the sound regions and   $0$ in a failed zone. 
 Along the footsteps of  \cite{FRC} (cf.\ Remark \ref{comparison-with-FRC} later on), 
we  introduce a second internal variable 
$D \in \Sps$  of type ``transformation'' strain 
leading to plastic effects and developing in the regions where the material is damaged; 
it 
shall hereafter be  formally  referred to as   {\em plastic 
strain}. 
Thus,  following \cite{FRC},  the inelastic part of the strain 
 is a function of $\chi$ and $D,$
\begin{equation}
\label{defXi}
\Xi:[0,1]\times\Sps
\to \Sps
\quad\text{ s.t. }\quad
\Xi(1,D)=0\text{ and }\Xi(0,D)=D\qquad\text{for every }D\in\Sps\,.
\end{equation}
As a particular choice for the function $\Xi$ one may consider 
\begin{equation}\label{eqD}
\Xi(\chi,D)=(1-\chi)D\,.
\end{equation}
But as a general feature of $\Xi,$ note that, in view of \eqref{symu}, for  $\chi=1$ we have $E_\mathrm{el}=e(\uu)$, 
whereas for $\chi=0$ we have   $E_\mathrm{el}=e(\uu) - D$. 
\par
Following the continuum-mechanical modeling perspective of Fr\'emond, cf.\ e.g.\ \cite{Fre02}, 
we shall now introduce the constitutive functionals and equations specifying  
the damage-plasticity model under consideration. 
 Let us point out that this approach is mainly based on a variational principle, i.e.\
the (generalized) principle of virtual powers. 
The main idea is that forces acting at a microscopic level in the material, 
responsible for the formation of micro-cracks and thus activating damage, have to be included in
the whole energy balance of the mechanical system. Hence, as 
 a prerequisite we shall postulate that the powers of the interior forces $P_i,$ 
the exterior forces $P_e,$ and the acceleration forces $P_a,$   
acting on the elasto-plastic and damageable body, occupying the (reference) domain 
$\Omega\subset\R^d,$ are balanced, i.e.,  
\begin{equation}
\label{lavirt}
P_i+P_e=P_a\qquad\text{ and, here, }P_a=0\,,
\end{equation}   
as we will confine our discussion to a quasistatic evolution. 
%
\paragraph{\bf The principle of virtual powers. }
%
The principle of virtual powers 
now postulates that the above balance of powers has to 
hold on any subdomain $\omega\subset\Omega,$ thus leading to the 
\emph{virtual powers} of this subdomain, which are assumed to be given in integral form  by 
\begin{equation*}
P_e(\omega)=\int_\omega p_e\,\mathrm{d}x+\int_{\partial\omega}\widetilde p_e\,\mathrm{d}S
\qquad\text{and}\qquad 
P_i(\omega)=\int_\omega p_i\,\mathrm{d}x. 
\end{equation*}
Consequently, 
different kinds of virtual velocities are introduced: 
macroscopic velocities ${\bf v}$, microscopic scalar velocities $\gamma$, 
and microscopic tensorial velocities $V$. Under the assumption that no external 
forces act on the microscopic level, we can prescribe  
the virtual external power $P_e$ of the subdomain $\omega\subset\Omega$  in  the form 
\begin{equation}
\label{PE}
P_e(\omega)=
\int_{\omega}{\mathbf f}\cdot{\bf v}\,\mathrm{d}x
+\int_{\partial{\omega}}{\mathbf t}\cdot{\mathbf v}\,\mathrm{d}S\,
\end{equation}
for any macroscopic virtual velocity ${\mathbf v}:\omega\to\R^d$ and for the given volumetric 
force $\mathbf f:\omega\to\R^d$ and the given surface force $\mathbf t:\partial\omega\to\R^d$ 
acting on  
$\omega\subset\Omega$. 
Similarly, the virtual internal power of $\omega$ is given in integral form as the 
product of the internal 
forces and virtual velocities. Due to the fact that the body is exposed to elasto-plastic 
defomations and damage, the internal forces consist of the macroscopic stress  
$\sigma:\Omega\to\R^{d\times d}$ and 
additional internal micro-stresses $B:\omega\to\R,$ ${\bf H}:\omega\to\R^d,$ 
$X:\omega\to\R^{d\times d}$, and  ${\bf Y}:\omega\to\R^{d^3}$ 
related to  damage and the plastic deformation.    
 In what follows, the symbols $:$ and $::$ stand for the products 
in the spaces of $d^2$- and $d^3$-tensors, respectively. 
For any macroscopic virtual velocity ${\mathbf v}$, 
for all microscopic velocities $\gamma:\omega\to\R$, 
and for all microscopic tensorial velocities $V:\omega\to\Sps,$ 
the virtual internal power of $\omega$ is thus given by   
\begin{equation}\label{PI}
P_i(\omega)=\int_{\omega} p_i\,\mathrm{d}x=-\int_{\omega}(\sigma: e({\mathbf v})
+B\gamma+{\bf H}:\nabla\gamma+X:V+{\bf Y}::\nabla V)\,\mathrm{d}x\,.
\end{equation}
 Observe that the definition of $p_i$ reflects  the fact that 
the power of the interior forces is zero for any  (macroscopic)  rigid motion. 
\par
Now, taking into account that the relations \eqref{lavirt}--\eqref{PI} shall hold for 
any subdomain $\omega\subset\Omega$ and for any virtual velocity, the  
resulting balance equations are
\begin{subequations}
\label{balanceeqs}
\begin{alignat}{4}
\label{momentum}
-\hbox{div }\sigma&=&{\bf f}&\quad\text{ in }\Omega, 
\qquad \sigma{\bf n}&=&{\bf t}\quad\hbox{ on }\partial\Omega\,,\\
\label{micro}
B-\hbox{div }{\bf H}&=&0&\quad\text{ in }\Omega,
\qquad {\bf H}\cdot{\bf n}&=&0\quad\hbox{ on }\partial\Omega\,,\\
\label{microten}
X-\hbox{div }{\bf Y}&=&0&\quad\text{ in }\Omega,
\qquad{\bf Y}\cdot{\bf n}&=&0\quad\hbox{ on }\partial\Omega\,.
\end{alignat}
\end{subequations}
%
\paragraph{\bf The constitutive relations.}
%
Following \cite[Chap.s 3, 4]{Fre02}, we assume that the constitutive  relations are comprised 
in two functionals, the free enery functional $\calF$ and the pseudo-potential 
of dissipation $\calR$ in integral form, with densities $\Psi$ and $\Phi,$ respectively: 
\begin{equation}
\calF(\uu,\chi,D):=\int_\Omega\Psi\,\mathrm{d}x\qquad\text{and}\qquad
\calR(
\dot\chi,\dot D):= \int_\Omega\Phi\,\mathrm{d}x\,. 
\end{equation}
Formally using the above localization arguments, and in view of \eqref{balanceeqs}, we prescribe 
the following constitutive relations 
\begin{align}
\label{constrel}
\sigma=\frac {\partial\Psi}{\partial e},\;\;\;
B=\frac{\partial\Phi}{\partial\chi_t}+\frac{\partial\Psi}{\partial\chi},\;\;\;
{\bf H}=\frac{\partial\Psi}{\partial\nabla\chi},\;\;\;
X=\frac{\partial\Phi}{\partial D_t}+\frac{\partial\Psi}{\partial D},\;\;\;
{\bf Y}=\frac{\partial\Psi}{\partial \nabla D}\,.
\end{align}
%
\paragraph{\bf Choice of the constitutive functions.}
%
We choose the density of the pseudo-potential of dissipation of the form
\begin{equation}
\label{phi}
\begin{split}
\Phi(D_t,\chi_t)&:=
R_\mathrm{inel}(D_t)+R_\mathrm{dam}(\chi_t),\qquad\text{where }\\
R_\mathrm{inel}(D_t)&:=\mu|D_t|\;\text{ and }\;
R_\mathrm{dam}(\chi_t):=\nu|\chi_t|+I_{(-\infty, 0]}(\chi_t)\,
\end{split}
\end{equation}
 for material parameters $\mu,\nu>0$. 
Note that both $R_\mathrm{inel}$ and $R_\mathrm{dam}$ are positively $1$-homogeneous, 
thus featuring a rate-independent evolution of the 
variables $D$ and $\chi$. In particular, for  
the definition of $R_\mathrm{dam},$ observe that the indicator term $I_{(-\infty,0]}$
enforces the unidirectionality constraint 
$\chi_t \leq 0$, i.e.\ that the parameter $\chi$ is a non-increasing function of time, 
 ensuring 
that damage can only increase. In turn, the free energy density $\Psi$ 
shall feature an indicator term acting on 
$\chi,$ cf.\ \eqref{choiceJ}, which forces the $\chi$-component of the solution to the 
evolutionary system we shall derive to take positive values. 
Starting from an initial datum $\chi_0$ with $\chi_0(x) \in [0,1]$ for 
almost all $x$ in $\Omega$, we will thus obtain that $\chi(x,t) \in [0,1]$ for almost all 
$(x,t)\in \Omega \times [0,T],$  in accordance with the physical meaning 
of $\chi$ as a \emph{proportion} of active bonds. 
\par 
For the definition of the free energy density $\Psi$ we assume, 
in the spirit of linear elasticity, that $\Psi$   
consists of a quadratic  elastic  contribution, a coupling term $H$, and 
 terms $J$ and $G$ also featuring regularizations for 
the damage variable and the plastic strain, respectively. In particular, $\Psi$ shall take 
the following form:  
\begin{equation}
\label{psi1}
\begin{split}
\Psi(e(\uu),\chi,D,\nabla\chi,\nabla D)
:=&\tfrac{1}{2}\big(e(\uu)-\Xi(\chi,D)\big):\bbK(\chi):\big(e(\uu)-\Xi(\chi,D)\big)\\
&\quad+H(\chi,D)+J(\chi,\nabla\chi)+G(D,\nabla D)\,.
\end{split}
\end{equation}
Here, $\Xi:\R\times\R^{d\times d}\to\R^{d\times d}$ is defined as in \eqref{defXi}. 
Moreover, as usual in damage models we consider a $\chi$-dependent stiffness tensor 
$\bbK $,  such that  $\bbK(\chi)$ is a symmetric  
$\R^{d^4}$-tensor for every $\chi,$ 
with the property that a decrease of the value of $\chi$ leads to a decrease of 
the quadratic contribution of the energy term.  
 In principle, with the choice of $\bbK$ we can incorporate in the model 
both the case in which the stiffness  degenerates when the material is completely damaged 
(i.e., for $\chi=0,$ the tensor $\bbK(0)$ is no longer positive definite, 
in particular it might happen that $\bbK(0)=0,$  cf.\ e.g.\ 
\cite{BouMiRou07,MiRoZe10CDEV,Miel11CDEE,HeiKra15CDLE}),  
and the case in which
some residual stiffness is guaranteed   even for $\chi =0$  (i.e.,  
$K(\chi)$ is positive definite for every $\chi$). 
In fact, in what follows we shall confine our analysis to the latter case. 
\par 
The coupling term $H$ shall take into account 
different cohesive properties of the material and the plastic behavior.
A possible choice could be 
\begin{equation}
\label{choiceH}
 H(\chi,D)=w(1-\chi)+\tfrac{1}{2}|D|^2(1-\chi)\qquad\text{with }w>0\,.
\end{equation}
Observe that, 
 if the material is completely undamaged, i.e.\ $\chi=1,$ 
the term $H$ plays the role of a cohesion energy while, in the case $\chi=0,$ i.e.\ 
when the material is maximally ``broken'', it leads to a hardening effect 
for the plasticity variable $D$, since $w>0$.   
\par 
The  function   $J$ for the damage variable shall guarantee the modeling  assumption  
$\chi\in[0,1]$ a.e.\ in $\Omega$  introducing some internal constraint. A possible choice would be 
\begin{equation}
\label{choiceJ}
J(\chi,\nabla\chi):=I_{[0,1]}(\chi)+\tfrac{\alpha}{2}|\nabla\chi|^2
+W_1(\chi)\,.
\end{equation}
\begin{osse}
\upshape
Let us point out that, setting $W_1(\chi)=\tfrac{1}{\alpha}(1-\chi)^2$ with small $\alpha>0$ 
would rather inhibit damage.  
However, we may allow also for non-convex choices of $W_1,$ e.g.\ in terms of 
a double-well potential. 
As analyzed in \cite{Thom11QEBV} in the context of brittle damage, 
the choice $W_1(\chi)=\tfrac{1}{\alpha}\chi^2(1-\chi)^2$ 
with small $\alpha>0$ will yield  
that $\chi\in\{0,1\}$ a.e.\ in $\Omega$ as $\alpha\to0,$ thus accounting only for the 
sound and the maximally damaged state of the material in the limit.
\end{osse}
In the same manner, the  term  $G$ may confine the plastic strain to a 
closed, convex subset $K$ of the subspace $\Sps\subset\R^{d\times d}$; 
we refer to Remark \ref{comparison-with-FRC} for different choices of $K$ and $\Sps$. 
As a possible form of $G$ we may consider 
\begin{equation}
\label{choiceG}
G(D,\nabla D):=I_K(D)+(|D|^2-1)^2+\tfrac{1}{q}|\nabla D|^q \qquad\text{ with }q\in(1,\infty)\,.
\end{equation}
\begin{remark}[Comparison with \cite{FRC} and possible choices of $\Sps$ and $K$]
\label{comparison-with-FRC}
\upshape
 In \cite{FRC}, following 
\cite{DP-Owen}  the authors refer to $\Xi$ as the \emph{structured strain}, 
and postulate for it the form  \eqref{eqD} as a function of 
$D$, which, as a function of $x\in \Omega$,  in 
turn represents the \emph{structured strain} that would develop in a neighbourhood of 
$x$ if the material was completely disgregated. 
In \cite{FRC} it is in fact remarked that the form of the field $D=D(x)$ may depend upon 
the material microstructure and the local defects of the body, so that
its complete characterization is an open problem. Hence, the authors propose 
a \emph{mesoscopic representation} for $D$ as a function of 
 $e(\uu)$.   
\par
 More precisely, the relation between $D$ and $e(\uu)$ is established through the minimization 
of  the quadratic elastic energy
 (i.e.\ the first term in \eqref{psi1}), 
for $\chi=0$, over suitable classes of admissible structured strains. 
This reflects the fact that $D$ physically represents the 
strain that a completely disgregated body may attain without energy consumption in order 
to accomodate the boundary data.
Different choices for the class  $\Sps$ of admissible strains lead to models with 
different types of material responses to damage and fractures.
\par
For example, taking  $\Sps$  as the space of  all symmetric  
tensors  it is possible to recover a model describing the formation of 
\emph{cleavage fractures}, viz.\ fractures directly proportional to the 
macroscopic deformation. 
Indeed, minimizing the  elastic contribution to the  free energy in \eqref{psi1}  
for $\chi=0$,   in the case in which  
$\bbK(0)$ is positive definite, leads to     $D=e(\uu)$. Observe that, 
when 
$D=e(\uu)$ we recover the original form of the elastic part in the free energy
\[
 \frac{1}{2}\big(e(\uu)-(1-\chi)D\big):\bbK(\chi):\big(e(\uu)-(1-\chi)D\big)=\frac 1 2 \chi^2|e(\uu)|^2.
\]
\par 
Setting  $\Sps$ as the space of symmetric tensors with null deviatoric part  
(i.e., \emph{trace-free} matrices) brings to a model for the formation 
of less brittle fractures, like those occurring in materials like stones.  
In this connection, as common in plasticity models (cf.\ e.g. \cite{HanRed99PMTN}),  
we might choose   $K\subset\Sps$ as a closed and convex subset of the set 
of deviatoric  matrices $\Sps$. 
  \end{remark}
%
\paragraph{\bf The final set of constitutive equations.}
%
Combining in \eqref{momentum}-\eqref{microten} 
the constitutive relations \eqref{constrel} with \eqref{phi} and \eqref{psi1} we obtain the 
set of constitutive equations, to be satisfied in $\Omega \times (0,T)$: 
\begin{subequations}
\label{formal-system}
\begin{eqnarray}
&&-\dive\left(\bbK(\chi):\big(e(\uu)-\Xi(\chi,D)\big)\right)={\bf f}\,,\\[2mm] %
&&\begin{aligned}
&
\partial R_\mathrm{dam}(\chi_t)+\partial_\chi J(\chi,\nabla\chi)
-\dive\tfrac{\partial J(\chi,\nabla\chi)}{\partial(\nabla\chi)}\\
&\quad\ni-\tfrac{1}{2}\big(e(\uu)-\Xi(\chi,D)\big):\bbK'(\chi):\big(e(\uu)-\Xi(\chi,D)\big)\\
&\qquad 
+\big(e(\uu)-\Xi(\chi,D)\big):\bbK(\chi):\tfrac{\partial\Xi(\chi,D)}{\partial\chi}
-\tfrac{\partial H(\chi,D)}{\partial\chi}\,,
\end{aligned}
\\[2mm]
&&
\begin{aligned}
&
\partial R_\mathrm{inel}(D_t)+\partial_D G(D,\nabla D)
-\dive\tfrac{\partial G(D,\nabla D)}{\partial(\nabla D)} \\
&\quad-\bbK(\chi)\left(e(\uu)-\Xi(\chi,D)\right):\tfrac{\partial\Xi(\chi,D)}{\partial D}
+\tfrac{\partial H(\chi,D)}{\partial D}\ni0\,
\end{aligned}
\end{eqnarray}
\end{subequations}
 with $\mathbf{f} $ the volume force  from  \eqref{PE}.
We shall assume that the inelastic stress function $\Xi$ and the material tensor $\mathbb{K}$ 
are suitably smooth.  
 We will supplement the rate-independent  system 
\eqref{formal-system}
 with the boundary conditions 
 \begin{equation}
 \label{b-c}
\begin{split}
& \uu(x,t)=\uu_D(t)\text{ on }  \Gamma_D,\qquad
\mathbb{K}(\chi):\big(e(\uu(x,t))-\Xi(\chi,D))\mathrm{n}=\mathbf{t}\text{ on }\Gamma_N\,,\\ 
&\tfrac{\partial\chi}{\partial\mathrm{n}}=0 \text{ in  } \partial\Omega \times (0,T), 
\qquad \tfrac{\partial D}{\partial\mathrm{n}}=0 \text{ in  } \partial\Omega \times (0,T),
\end{split}
\end{equation}
with $\Gamma_\mathrm{D}$ and $\Gamma_\mathrm{N}$ the Dirichlet and the Neumann parts of the boundary 
$\partial\Omega$, 
respectively, and $\mathrm{n}$ the outward unit normal to $\partial\Omega$. 
 In what follows, we will address the existence of solutions to the 
boundary-value problem \eqref{formal-system}--\eqref{b-c} in  a suitably weak sense. 
We will discuss a suitable solution concept in Section \ref{FullyRI} ahead. 
 \begin{remark}
 \upshape
 \label{rmk:particular-case}
Let us point out that system \eqref{formal-system} is related, for special choices of the involved functionals, 
to well-known models in plasticity and phase transitions processes. Indeed, taking, for example, 
$$
\bbK(\chi)=\mathrm{Id}\in\R^{d^4}, 
\quad  \Xi(\chi,D) = (1-\chi)D,  \quad\hbox{and}\quad H(\chi,D)=w(1-\chi) + \frac 1 2 |D|^2 (1-\chi),
$$
we get  the following PDE system in $\Omega \times (0,T)$
\begin{subequations}
\label{formal-system-simpler}
\begin{eqnarray}
&&-\dive\left(e(\uu)-(1-\chi)D \right)={\bf f}\,,\\[2mm] %
&&\begin{aligned}
&
\partial R_\mathrm{dam}(\chi_t)+\partial_\chi J(\chi,\nabla\chi)
-\dive\tfrac{\partial J(\chi,\nabla\chi)}{\partial(\nabla\chi)}\\
&\quad\ni-\tfrac{1}{2}\big| e(\uu)-(1-\chi)D\big|^2  -D\colon(e(\uu) -(1-\chi)D) + \frac12 |D|^2 +w,
\end{aligned}
\\[2mm]
&&
\begin{aligned}
&
\partial R_\mathrm{inel}(D_t)+\partial_D G(D,\nabla D)
-\dive\tfrac{\partial G(D,\nabla D)}{\partial(\nabla D)} \\
&\quad-\left(e(\uu)-(1-\chi) D)\right)(1-\chi)
+(1-\chi)D\ni0\,.
\end{aligned}
\end{eqnarray}
\end{subequations} 
In particular, 
 without 
terms as $J$ and $G$ in the free energy, the resulting equations correspond to a 
rate independent evolution for the parameter $\chi$, governed by a quadratic source of damage 
(including strain and structured/plastcity strain), and a plasticity equation with hardening 
contribution (obtained by the third equation in the case $\chi=0$). 
\end{remark}
%
\section{Assumptions and notation}
\label{Ass}
%
In the following, given a Banach space  $\mathbf{B}$,  we shall denote 
by $\mathbf{B}^*$ its dual space, by $\|\cdot\|_\mathbf{B}$ its norm,  
and by $\langle \cdot, \cdot \rangle_\mathbf{B}$ 
the duality pairing between $\mathbf{B}^*$ and $\mathbf{B}$.  
We set $R_\infty: = \R \cup \{+\infty\}$. 

In the next lines, we specify the mathematical assumptions for the quantities introduced so far. 
\par
%
\paragraph{\bf Assumptions on the domain: } 
%
We assume that
\begin{equation}
\begin{split}
\label{ass-dom}
&\Omega\subset\R^d\,,\;d\in\N\,,\text{ is a bounded domain with Lipschitz-boundary
$\partial\Omega$ such that\ }\\
&\Gamma_\mathrm{D}\subset\partial\Omega\text{ is nonempty and relatively open and }
\Gamma_\mathrm{N}:=\partial\Omega\backslash\Gamma_\mathrm{D}\,.
\end{split}
\end{equation}
%
\paragraph{\bf Function spaces: }
%
We fix the function spaces as follows  
\begin{subequations}
\label{spaces}
\begin{eqnarray}
\label{Sph}
\Sph &:=&\{u\in H^1(\Omega,\R^d),\,u=0\text{ on }\Gamma_D\}\,,\\
\label{Spz}
\Spz&:=&L^1(\Omega)\,,\\
\label{Spm}
\Spm&:=&\{z\in\Spz,\,z\in[0,1]\text{ a.e.\ in }\Omega\}\,,\\
\label{Spx}
\Spx&:=&\{z\in W^{1,r}(\Omega)\}\,, \qquad r>1,  \\
\label{Spv}
\Spv&:=& L^1(\Omega;\R^{d\times d})\,,  \\
\label{Spu}
\Spu &:=&W^{1,q}(\Omega;\Sps)\,, \qquad q\geq2d/(d\!+\!2)
,\\
\label{Spu1}
\Spu_1&:=&L^{q_1}(\Omega;\Sps)\,, \qquad q_1 \geq \max\{q,2\}, 
\end{eqnarray}
and we recall that $\Sps$ is a subspace of $\R^{d\times d}$.  With \eqref{growthG}
ahead we shall further specify the conditions on the  indices $q$ and $q_1$. 
\end{subequations}
%
\paragraph{\bf Assumptions on the given data: }
%
Given 
$\Sph$ from \eqref{Sph}, we shall 
assume that the volume forces $\mathbf{f}$ and the surface forces $\mathbf{t}$ from \eqref{PE}  
are comprised in   a  time-dependent functional $F: [0,T] \to \Sph^*$. 
Moreover, for all $t\in[0,T],$ we suppose that the Dirichlet datum 
$\uu_\mathrm{D}(t)$ has has an extension 
from $\Gamma_\mathrm{D}$ into the domain $\Omega,$ also denoted 
by $\uu_\mathrm{D}(t)$.  In particular, we make the following regularity assumptions:   
\begin{subequations}
\label{given}
\begin{eqnarray}
&&
\begin{aligned}
&F\in C^1([0,T];\Sph^*)\text{ comprises both volume forces and Neumann data,}\\
&\text{such that }\|F\|_{C^1([0,T];\Sph^*)}\leq C_F,\\ 
\end{aligned}
\\
&&
\begin{aligned}
&\uu_\mathrm{D}\in C^1([0,T];\Sph)\text{ is an extension of the Dirichlet datum,}\\
&\text{such that }\|\uu_\mathrm{D}\|_{C^1([0,T];\Sph)}\leq C_\mathrm{D}
\text{ and }e_\mathrm{D}:=e(\uu_\mathrm{D})\,.\\
\end{aligned} 
\end{eqnarray}
Furthermore, for the elastic tensor $\mathbb{K}:[0,1]\to\R^{d^4}$ 
we assume symmetry and positive definiteness, i.e.,  
\begin{equation}
\label{assK}
\begin{split}
&\forall\,\chi\in[0,1]:\;\mathbb{K}(\chi)\text{ is symmetric}\,, 
\\
&\exists\,K_1,K_2>0\,\forall\,e\in\R^{d\times d}\,\forall\,\chi\in[0,1]:\;
|e|^2K_1\leq e:\bbK(\chi):e\leq K_2|e|^2\,. 
\end{split}
\end{equation}
\end{subequations}
Recall that the positive definiteness of $\mathbb{K}(\chi)$ for any $\chi\in[0,1]$ 
ensures some residual stiffness 
of the material even in the case of maximal damage $\chi=0$.
%
\paragraph{\bf Assumptions on the inelastic strain $\Xi$: }For the inelastic strain 
%
$\Xi:[0,1]\times\R^{d\times d}\to\R^{d\times d}$ introduced in \eqref{defXi} we make 
the following assumptions:  
\begin{subequations}
\label{propXi}
\begin{eqnarray}
\label{CharaXi}
\hspace*{3mm}
&&\Xi\in C^0([0,1]\times \Sps;\Sps) \quad\text{s.t.}\\
\label{monoXi}
\hspace*{3mm}
&&\forall\,\chi_1<\chi_2\in[0,1],\,D\in \Sps 
:\;|\Xi(\chi_2,D)| \leq |\Xi(\chi_1,D)|\,.\;
\Xi(1,D)=0\text{ and }\Xi(0,D)=D\,.\quad\,
\end{eqnarray}
\end{subequations} 
For later use we remark that \eqref{monoXi} in particular implies that 
\begin{equation}
\label{growthXi}
|\Xi(\chi,D)| \leq |D| \qquad \text{for all } (\chi,D)\in [0,1] \times \Sps\,.
\end{equation}
%
\paragraph{\bf Assumptions on the damage regularization: } 
%
In view of \eqref{psi1}, given 
$\Spz$ from \eqref{Spz}, we define the damage 
regularization functional in terms of   
\begin{subequations}
\label{propJ}
\begin{equation}
\label{defJ}
\begin{split}
&\calJ:\Spz\to\R_\infty,\;\calJ(\chi):=\int_\Omega J(\chi,\nabla\chi)\,\mathrm{d}x
\quad\text{with}\\
&J(\chi,\nabla\chi):=I_{[0,1]}(\chi)+\tilde J(\chi,\nabla\chi)\, 
\end{split}
\end{equation}
and we assume that $\tilde J$ has the following properties:
\begin{eqnarray}
\label{contiJ}
\hspace*{-8mm}
\text{Continuity:}\hspace*{-5mm}&&\tilde J\in C(\R\times\R^d;\R)\,,\\
\nonumber
\hspace*{-8mm}
\text{Growth:}\hspace*{-5mm}&&\exists\, c_J,\tilde c_J,C_J>0,\,\exists\,r\in(1,\infty),\,
\forall\chi\in[0,1],\,A\in\R^d:\\
\label{growthJ}
&&\hspace*{20mm}c_J(|A|^r-\tilde c_J)\leq\tilde J(\chi,A)\leq C_J(|A|^r+1)\,,
\qquad
\\ 
\hspace*{-8mm}
\label{convexJ}
\text{Convexity:}\hspace*{-5mm}&&\forall\chi\in[0,1]:\;\tilde J(\chi,\cdot)
\text{ is convex on }\R^{d\times d}\,.
\end{eqnarray}
\end{subequations}
Under these assumptions $\calJ$ may e.g.\ be a pure (convex) gradient 
regularization (i.e.\ $\tilde J$ does not 
depend on $\chi$), but it may also incorporate terms like $(1-\chi)^2$ enforcing $\chi$ 
to stay close to $1,$ hence inhibiting damage. Also nonconvex terms of lower order, 
e.g.\ double well potentials, may contribute to $\tilde J$, provided the leading term is convex. 
In particular, note that the density $J$ 
considered in \eqref{choiceJ} is comprised in this set of assumptions.      
%
\paragraph{\bf Assumptions on the plastic regularization: }
%
In view of \eqref{psi1}, given 
$\Spv$ from \eqref{Spv} and the subspace 
$\Sps\subset\R^{d\times d}$, 
 we introduce the plastic 
regularization functional as follows  
\begin{subequations}
\label{propG}
\begin{equation}
\label{defG}
\begin{split}
&\calG:\Spv\to\R_\infty,\;\calG(D):=
\begin{cases}
\int_\Omega G(D,\nabla D) \dd x & \text{if } G(D,\nabla D) \in L^1(\Omega),
\\
\infty & \text{otw.}
 \end{cases} 
\quad\text{with}\\
&G(D,\nabla D):=I_{K}(D)+\tilde G(D,\nabla D)\,, 
\end{split}
\end{equation}
where $K$ is a closed, convex subset of $\Sps,$   
and we suppose that $\tilde G$ has the following properties:
\begin{eqnarray}
\label{contiG}
\hspace*{8mm}
\text{Continuity:}\hspace*{-5.5mm}
&&\tilde G\in C(\R^{d\times d}\times\R^{d^4};\R)\,,\\
\hspace*{8mm}
\nonumber
\text{Growth:}\hspace*{-5.5mm}&&\exists\, c_G,\tilde c_G,C_G>0,\,\exists\, q,\, q_1 \text{ with } 
\tilde q: =\tfrac{2d}{d+2}\leq q<q_1\in[2,\infty),
\forall D\!\in\!\R^{d\times d},
A\!\in\!\R^{d^4}\!:
\\
\label{growthG}
&&\hspace*{5.5mm}
c_G(|A|^q+|D|^{q_1}-\tilde c_G)\leq\tilde G(D,A)\leq C_G(|A|^q+|D|^{q_1}+1)\,,\qquad\\ 
\hspace*{-8mm}
\label{convexG}
\text{Convexity:}\hspace*{-5.5mm}&&\forall\,D\in\R^{d\times d}:\;\tilde G(D,\cdot)
\text{ is convex on }\R^{d^4}\,.
\end{eqnarray}
\end{subequations}
The assumption on the exponents $q<q_1$ 
(which are the ones associated with the spaces $\Spu$ and $\Spu_1$, cf.\ \eqref{Spu} and \eqref{Spu1}), 
implies the  coercivity of the integral functional $\calG$ wrt.\ the space 
$W^{1,q}(\Omega;\R^{d\times d})$. Moreover, $q_1\geq 2$ yields that 
$D\in L^2(\Omega;\R^{d\times d})$ on energy sublevels, whereas the lower bound 
$\tfrac{2d}{d+2}\leq q$ ensures that 
\begin{equation}
\label{assq}
W^{1,q}(\Omega;\R^{d\times d})\Subset L^2(\Omega;\R^{d\times d})
\text{ compactly, with embedding constant }C_{\Spu\to L^2}\,. 
\end{equation}
Note that, for instance, $G$ from \eqref{choiceG} complies with the 
above growth assumptions if $q\in[\tilde q,4]$.
%
\paragraph{\bf Assumptions on the coupling term: }
%
In view of \eqref{psi1}, \eqref{defJ}, 
and \eqref{defG} we introduce the coupling term as follows 
\begin{subequations}
\label{defH}
\begin{equation}
\label{defH1}
\calH:\Spz\times\Spv\to\R\,\quad \calH(\chi,D):=
\left\{
\begin{array}{ll}
\int_\Omega H(\chi,D)\,\mathrm{d}x&\text{if }(\chi,D)\in\Spm\times(\Spu\cap\Spu_1),\\
\infty&\text{if }(\chi,D)\in\Spz\times\Spv,
\end{array}\right. 
\end{equation}
and  for the density $H$  we assume 
\begin{eqnarray}
\label{contiH}
\hspace*{8mm}
\text{Continuity:}\hspace*{-6mm}&&H\in C([0,1]\times\R^{d\times d};\R)\,,\\
\label{growthH}
\text{Growth:}\hspace*{-6mm}&&\exists\, C_H\!>0,\,\exists\,q_2\in[1,q_*)
\forall\,(\chi,D)\!\in\![0,1]\times\R^{d\times d}\!:\;
0\leq H(\chi,D)\leq C_H(|D|^{q_2}\!+\!1)\,,
\end{eqnarray}
\end{subequations}
 where $q_*=dq/(d-q)$ if $q<d$ and $q_*=\infty$ if $q\geq d$. Note that $q_2\in[1,q_1]$ would be 
sufficient to ensure the integrability of $H$. But for the continuity of $\calH$ it is required that 
$q_2\in[1,q_*)$. Also note 
that the special choice \eqref{choiceH} of $H$ complies with the above assumptions 
with   $q_2 \geq 2$. 
%
\section{Energetic solutions for the rate-independent system with damage and plasticity}
\label{FullyRI}
%
In view of the positively $1$-homogeneous character of the 
 pseudo-potential of dissipation $\calR$  from \eqref{phi}, system 
\eqref{formal-system} is rate-independent. 
Therefore, for the  analysis of the associated initial-boundary value problem  
we will resort to a weak solvability concept for rate-independent systems, 
namely the notion of energetic solution, 
cf.\  \cite{MiTh04RIHM,Miel05ERIS}. 
In order to give it in the context of the present system with damage and plasticity 
we now introduce the energy functional, depending on $t\in [0,T]$ 
 and  on the state variables $(\uu,\chi,D)$, 
  and the dissipation potential
   associated with 
\eqref{formal-system}. In accordance with Sec.\ \ref{Ass}
 we set 
\begin{subequations}
\label{defFctsfinal}
\begin{eqnarray}
\label{defD}
&&
\calR=\calR_\mathrm{inel}+\calR_\mathrm{dam}:\Spv\times\Spz\to\R_\infty,\quad\text{ where,}\\
\label{defV2}
&&
\calR_\mathrm{inel}: \Spv\to[0,\infty),\quad\calR_\mathrm{inel}(A):=
\int_\Omega R_\mathrm{inel}(A)\,\mathrm{d}x
\quad\text{with }R_\mathrm{inel}(A):=\mu|A|\,,\\
\label{defR}
&&
\calR_\mathrm{dam}:\Spz\to[0,\infty],\quad\calR_\mathrm{dam}(z)
:=\int_\Omega R_\mathrm{dam}(z)\,\mathrm{d}x\quad\text{with }
R_\mathrm{dam}(z):=\nu|z|+I_{(-\infty,0]}(z)\,,\\
\label{defE}
&&
\begin{aligned}
\calE:[0,T]\times\Sph\times\Spz\times\Spv\to\R_\infty\,,
\;\calE(t,\uu,\chi,D): & 
= \calF(\uu,\chi,D) -  \langle F(t),\uu\rangle_\Sph 
\\
&
=\calW(t,\uu,\chi,D)+\calJ(\chi)+\calG(D)+\calH(\chi,D)
\,,
\end{aligned}
\\
\label{defW}
&&
\begin{aligned}
&
\calW:[0,T]\times\Sph\times \Spm
\times L^{2}(\Omega;\R^{d\times d})\to\R\,,
\end{aligned}\\
&&
\begin{aligned}
\calW(t,\uu,\chi,D):=& \frac12  \int_\Omega
\big(e(\uu)\!  + \!e_\mathrm{D}(t)\!-\!\Xi(\chi,D)\big):
\mathbb{K}(\chi):\big(e(\uu)\!  +  \!e_\mathrm{D}(t)\!-\!\Xi(\chi,D)\big)
\,\mathrm{d}x\\
&-\langle F(t),\uu\rangle_\Sph\,. \qquad
\end{aligned}
\nonumber
\end{eqnarray}
\end{subequations}
In order to give the energetic concept we shall use the shorthand notation 
$\mathbf{q}=(\mathrm{u},\chi,D)$ and set $\calQ:=\Sph\times\Spz\times\Spv,$  
the state space where $\mathbf{q}$ varies, 
whereas $\calZ:=\Spz\times\Spv$ stands for the space of the \emph{dissipative} variables $z:=(\chi,D)$. 
In this way, we shall now state the definition 
of energetic solutions in an abstract form. 
\par 
\begin{defin}[Energetic formulation of rate-independent processes]
\label{defenfo}
\hspace*{-0.9mm}
For\,the\,initial\,datum\,$\mathbf{q}_0\!\in\!\calQ$ find $\mathbf{q}\!:\![0,T]\!\to\!\!\calQ$ such that for all  
$t\in[0,T]$ the global stability \eqref{gS} and the global energy balance \eqref{gE} hold
\begin{subequations}
\label{enfo}
 \begin{align}
\label{gS}
\text{\rm Stability}&:\quad\text{for all }\tilde{\mathbf{q}}\in\mathcal Q:\;\hspace*{2mm}
\mathcal E(t,\mathbf{q}(t))\leq\mathcal E(t,\tilde{\mathbf{q}})+\mathcal R(\tilde{\mathbf{q}}{-}\mathbf{q}(t)),\\
\label{gE}
\text{\rm Energy balance}&:\quad
\mathcal E(t,\mathbf{q}(t))+\mathrm{Diss}_{\mathcal R}(\mathbf{q},[s,t])=
\mathcal E(0,\mathbf{q}(0))+\int_s^t \partial_t  \mathcal E(\xi,\mathbf{q}(\xi))\,\mathrm{d}\xi
\end{align} 
\end{subequations}
with $\mathrm{Diss}_{\mathcal R}(\mathbf{q},[0,t]):=\sup
\big\{\sum_{j=1}^N\mathcal R(\mathbf{q}(\xi_{j}){-}\mathbf{q}(\xi_{j-1}))
\,|\,s=\xi_0<\ldots<\xi_N=t,\,N\in\N\big\}$, where $\calR (\mathbf{q}_1-\mathbf{q}_2)$ 
has to be understood as $\calR(z_1-z_2)$  with 
$\mathbf{q}_i = (\uu_i,z_i)$ for $i=1,2$. 
\end{defin}
The claim that \eqref{enfo} has to hold for all $t\in[0,T]$ 
entails that the energetic formulation is only solvable for initial data 
$\mathbf{q}_0$ which satisfy \eqref{gS} for $t=0.$ For later convenience we introduce 
the set of stable states at time $t\in[0,T]$ 
\begin{align}\label{stS}
\calS(t):=\{\mathbf{q}\in\calQ,\, \mathbf{q}\text{ satisfies }\eqref{gS} 
\text{ wrt. } \calE(t,\cdot) \text{ and }\calR\}\,. 
\end{align}
A solution in terms of the energetic formulation is called 
an energetic solution to the rate-independent system $(\calQ,\calE,\calR)$. 
\par
In what follows we will investigate the existence of energetic solutions \eqref{enfo}  
for the rate-independent system with damage and plasticity 
defined by  the functionals $\calE$ and $\calR$ from   \eqref{defFctsfinal} by verifying the 
assumptions of an abstract existence theorem given   
in \cite{Miel08DEMF}, cf.\ also \cite{MiTh04RIHM,Miel05ERIS, MRS06}. 
We now  shortly recap this result, highlighting the role of a 
series of conditions on the driving functionals 
$\calE$ and $\calR$ enucleated below, cf.\ \eqref{CE}--\eqref{CC}. 
\par
This result is proved by  passing to the limit in a time-discretization scheme, 
where discrete energetic solutions are constructed via time-incremental minimization of a functional 
involving the sum of the dissipation potential $\calR$ and the energy $\calE$. The existence of minimizers 
follows from the \emph{direct method}, provided that the 
energy functional $\calE$ complies with a standard coercivity requirement, cf.\ \eqref{E1} ahead. 
It is shown that 
the discrete solutions fulfill the stability condition
 and  a discrete energy \emph{inequality}. 
 The proof of the discrete stability relies on the fact that the dissipation distance $\calD$ induced by $\calR$
complies with the triangle inequality, see \eqref{D1} later on. 
From the discrete energy inequality  all a priori estimates are derived. For this, 
a crucial role is played by a condition ensuring that the power of the external forces 
$\partial_t \calE$ is controlled by the energy $\calE$ itself, cf.\  \eqref{E2} below,
so that the last integral term on the right-hand side of \eqref{gE} is estimated 
in terms of the energy, and  Gronwall's lemma can be applied. 
 All in all, 
$\calE$ has 
to satisfy the following properties: 
\begin{subequations}
\label{CE}
\begin{align}
\label{E1}
&\begin{array}{l}
\text{\it Compactness of energy sublevels:}\quad
\forall\, t\!\in\! [0,T]\; \forall\, E\!\in\!\R:\\
\qquad\quad
L_{E}(t):=\{\mathbf{q}\in\calQ\,|\,\calE(t,\mathbf{q})\leq E\}\text{ is weakly seq.\ compact.}
\end{array}
\\[1.5mm]
\label{E2}
&\begin{array}{l}
\text{\it Uniform control of the power:}\\
\qquad\quad\exists\, c_0\!\in\!\R\;\exists\,c_1\!>\!0\;\forall\, 
(t,\mathbf{q}) \!\in\![0,T]\!\times\!\calQ
\text{ with }\calE(t, \mathbf{q})<\infty:\\
\qquad\quad\calE(\cdot,\mathbf{q})\in C^1([0,T])\text{ and }
|\partial_t\calE(t,\mathbf{q})|\leq c_1(c_0{+}\calE(t,\mathbf{q})). 
\end{array}
\end{align}
\end{subequations}
\begin{remark}
\upshape
\label{rmk:needed?}
 Observe that   condition \eqref{E2} 
 in fact guarantees a 
Lipschitz estimate for $\calE$ with respect to time via 
Gronwall's lemma, namely  
\begin{align}
\label{Lip}
|\calE(t,\mathbf{q})-\calE(s,\mathbf{q})|\leq\left({\mathop{\mathrm{e}}}^{c_1|t-s|}-1\right)
(\calE(t,\mathbf{q})+c_0)
\leq{\mathop{\mathrm{e}}}^{c_1T}(\calE(t,\mathbf{q})+c_0)|t-s|\,.
\end{align}
Hence, if $\calE(t,\mathbf{q})<E$ for $E\in\R,$ then, for 
$c_E:={\mathop{\mathrm{e}}}^{c_1T}(E+c_0),$ estimate \eqref{Lip} implies
\begin{align}
\label{L}
|\calE(t,\mathbf{q})-\calE(s,\mathbf{q})|\leq c_E|t-s|\,.
\end{align} 
\end{remark}
 As for the dissipation potential $\calR$, we require that the induced 
dissipation distance  
 \begin{equation}
 \label{D-defined}
 \calD:\calZ\times\calZ \to [0,\infty]  \quad \text{ given by }    
\quad \calD(z,\tilde z):=\calR(\tilde z{-}z)  \quad \text{
for all $z,\tilde z\in\calZ$}, 
\end{equation}
fulfills 
\begin{subequations}
\label{CD}
\begin{align}
\label{D1} 
&\begin{array}{ll}
\text{\it Quasi-distance:}\quad
\forall\, z_1,z_2,z_3\in\calZ:\;&
\calD(z_1,z_2)=0\;\Leftrightarrow z_1=z_2\ \text{ and }\\ 
&\calD(z_1,z_3) \leq\calD(z_1,z_2)+\calD(z_2,z_3);
\end{array}
\\[1mm]
\label{D2}
&\begin{array}{l}
\text{\it Semi-continuity:}\\
\quad\qquad
\calD:\;\calZ\times\calZ \to [0,\infty]\text{ is
weakly sequentially lower semi-continuous.}
\end{array}
\end{align}
\end{subequations}
\par
  The abstract existence proof then consists in  passing to the limit in the discrete energy 
inequality by lower semicontinuity arguments, 
leading to an upper energy estimate,  and in the discrete stability condition, 
leading to \eqref{gS}. The lower energy estimate which ultimately  yields the energy balance 
\eqref{gE} then follows from a by now classical procedure, based on the combination 
of the previously proved \eqref{gS} with a Riemann-sum argument. 
For the limit passage  in the discrete energy inequality and in the discrete  stability,
the following compatibility conditions are required: 
\\
{\sl 
For every stable sequence $(t_k,\mathbf{q}_k)_{k\in \N}$ with $t_k\to t,$ 
$\mathbf{q}_k\rightharpoonup \mathbf{q}$ 
{in }$[0,T]\times\calQ${ we have}} 
\\[-6mm]
\begin{subequations}
\label{CC}
\begin{align}
\label{CCa}
\text{Convergence of the power of the energy: }\;&\partial_t \calE(t_k,\mathbf{q}_k) 
\to \partial_t\calE(t,\mathbf{q})\,,\\
\label{CCb}
\text{Closedness of sets of stable states: }\;&\mathbf{q}\in \calS(t)\,. \\[-6mm]
\nonumber
\end{align}
\end{subequations}
\par 
With these prerequisites at hand the abstract existence result reads as follows:  
\begin{teor}[Abstract main existence theorem\;\cite{Miel08DEMF}]
\label{amet}
Let the rate-independent system \\ $(\calQ,\calE,\calR)$ satisfy 
conditions
\eqref{CE} and \eqref{CD}. Moreover, let  the
compatibility conditions \eqref{CC} hold. 
\par
Then, for each ${\mathbf{q}}_0\in \calS(0)$ there exists an energetic solution 
$\mathbf{q}:[0,T]
\to \calQ$ for $(\calQ,\calE,\calR)$  satisfying $\mathbf{q}(0)={\mathbf{q}}_0$. 
\end{teor}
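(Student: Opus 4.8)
The plan is to follow the standard time-incremental minimization strategy for energetic solutions, exactly as sketched in the paragraphs preceding the statement. First I would fix a partition $0 = t^0_\tau < t^1_\tau < \cdots < t^N_\tau = T$ of $[0,T]$ with fineness $\tau := \max_k (t^k_\tau - t^{k-1}_\tau)$ and construct discrete solutions inductively: setting $\mathbf{q}^0_\tau := \mathbf{q}_0$, I define $\mathbf{q}^k_\tau$ for $k = 1,\ldots,N$ as a minimizer of
\[
\mathbf{q} \mapsto \calE(t^k_\tau, \mathbf{q}) + \calR(\mathbf{q} - \mathbf{q}^{k-1}_\tau)
\]
over $\calQ$. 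Existence of such minimizers follows from the direct method: this functional has weakly sequentially compact sublevels by the coercivity/compactness condition \eqref{E1}, combined with the weak lower semicontinuity of $\calR$ encoded in the semicontinuity of $\calD$ from \eqref{D2}.

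Next I would extract the two consequences of minimality that drive the entire argument. Comparing $\mathbf{q}^k_\tau$ with an arbitrary competitor $\tilde{\mathbf{q}}$ and invoking the triangle inequality for the dissipation distance $\calD$ from \eqref{D1} yields the discrete stability $\mathbf{q}^k_\tau \in \calS(t^k_\tau)$. Comparing $\mathbf{q}^k_\tau$ instead with the previous iterate $\mathbf{q}^{k-1}_\tau$ produces the discrete upper energy inequality, which after summation over the steps and the fundamental theorem of calculus (available since $\calE(\cdot,\mathbf{q}) \in C^1([0,T])$ by \eqref{E2}) gives a discrete version of \eqref{gE} with an error term of the form $\int \partial_t\calE$. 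At this point the uniform control of the power \eqref{E2}, fed into a discrete Gronwall estimate, bounds the discrete energies $\calE(t^k_\tau,\mathbf{q}^k_\tau)$ and the total dissipation $\mathrm{Diss}_{\calR}$ uniformly in $\tau$ and $k$; in particular all discrete states remain in a single sublevel $L_E(t)$.

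With these a priori bounds I would pass to the limit $\tau \to 0$. Writing $\overline{\mathbf{q}}_\tau$ for the right-continuous piecewise-constant interpolant, the uniform bound on $\mathrm{Diss}_{\calR}$ permits an abstract Helly-type selection principle, extracting a (non-relabelled) subsequence with $\overline{\mathbf{q}}_\tau(t) \rightharpoonup \mathbf{q}(t)$ in $\calQ$ for every $t\in[0,T]$, the limit $\mathbf{q}$ being a curve of bounded $\calR$-variation. Stability of the limit, $\mathbf{q}(t) \in \calS(t)$ for all $t$, then follows immediately from the closedness of stable states \eqref{CCb}. For the energy balance \eqref{gE} I would first obtain the upper estimate ($\leq$) by taking $\liminf$ in the discrete energy inequality, using weak lower semicontinuity of $\calE$ and of $\mathrm{Diss}_{\calR}$ (via \eqref{D2}) on the left-hand side, and the convergence of the power \eqref{CCa} together with dominated convergence on the right-hand side.

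Finally, the matching lower estimate ($\geq$) is supplied by the classical Riemann-sum argument: fixing a partition $s = \sigma_0 < \cdots < \sigma_M = t$, I test the stability inequality \eqref{gS} at each time $\sigma_{j-1}$ against the competitor $\mathbf{q}(\sigma_j)$, rewrite $\calE(\sigma_{j-1},\mathbf{q}(\sigma_j))$ via the integral of $\partial_t\calE$, and sum over $j$; the energy terms telescope and the dissipation contributions are bounded by $\mathrm{Diss}_{\calR}(\mathbf{q},[s,t])$, so that letting the mesh tend to zero and using the time-Lipschitz estimate \eqref{L} established in the preceding remark converts the sum into $\int_s^t \partial_t\calE(\xi,\mathbf{q}(\xi))\,\mathrm{d}\xi$. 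Combining the two inequalities yields \eqref{gE}. The main obstacle I anticipate is precisely this lower-estimate step: it is the only place where stability and energetics must interact quantitatively, and the Riemann-sum passage to the limit requires care to control the state-dependence of $\partial_t\calE$ along the jumps of $\mathbf{q}$ and to justify that the selected pointwise limit is genuinely admissible as a test competitor at each node $\sigma_{j-1}$.
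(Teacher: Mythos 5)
Your proposal is correct and follows essentially the same route the paper itself sketches for Theorem \ref{amet} (which the paper does not prove in detail, but attributes to \cite{Miel08DEMF}): time-incremental minimization resolved by the direct method via \eqref{E1} and \eqref{CD}, discrete stability from the triangle inequality \eqref{D1}, a priori bounds from \eqref{E2} and Gronwall, Helly-type selection, limit stability from \eqref{CCb}, the upper energy estimate by lower semicontinuity together with \eqref{CCa}, and the lower estimate by the classical Riemann-sum argument combined with the stability \eqref{gS}. No gaps beyond those inherent in the paper's own level of detail.
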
  
\par 
 From Thm.\ \ref{amet} we will derive our own existence result for the rate-independent 
system with damage and plasticity.  
\begin{teor}[Existence of energetic solutions for the rate-independent system from \eqref{defFctsfinal}]
\label{EnSolanisoEx}
Let \\ the assumptions \eqref{ass-dom} and \eqref{given}--\eqref{defH} stated in Sec.\ \ref{Ass} be satisfied. 
Then the rate-independent system for damage and plasticity $(\calQ,\calE,\calR)$ given by \eqref{defFctsfinal} 
satisfies the properties \eqref{CE}, \eqref{CD} \& \eqref{CC},  
and hence, for each ${\mathbf{q}}_0\in\calQ$ with ${\mathbf{q}}_0\in\calS(0)$ 
it admits an energetic solution in the sense of 
Def.\ \ref{defenfo}.  
\end{teor}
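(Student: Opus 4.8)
The plan is to verify the three clusters of hypotheses \eqref{CE}, \eqref{CD} and \eqref{CC} of the abstract theorem (Thm.\ \ref{amet}) for the system $(\calQ,\calE,\calR)$ from \eqref{defFctsfinal}, and then simply invoke it. I would organize the verification term by term, exploiting throughout the compact embeddings: $\Spx=W^{1,r}(\Omega)\Subset L^p(\Omega)$ for appropriate $p<\infty$ (in particular yielding a.e.\ convergence of a subsequence of $\chi_k$) for the damage variable, and $\Spu=W^{1,q}(\Omega;\Sps)\Subset L^2(\Omega;\Sps)$ from \eqref{assq} for the plastic strain, so that along energy-bounded sequences one has $\chi_k\to\chi$ and $D_k\to D$ strongly in $L^p$, resp.\ $L^2$, and a.e., while $e(\uu_k)\rightharpoonup e(\uu)$ only weakly in $L^2$.

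I begin with the dissipation conditions \eqref{CD}. Both $\calR_{\mathrm{inel}}$ and $\calR_{\mathrm{dam}}$ are convex, positively $1$-homogeneous and lower semicontinuous, so $\calR$ is sublinear; this gives the triangle inequality in \eqref{D1}, while $\calD(z,\tilde z)=0\iff z=\tilde z$ follows from the definiteness of the norm terms $\mu|\cdot|$, $\nu|\cdot|$. The asymmetry of $\calD$ caused by the indicator $I_{(-\infty,0]}$ is compatible with the notion of quasi-distance. For \eqref{D2}, weak lower semicontinuity of $\calD$ follows from convexity of the integrands together with strong (hence weak) closedness of the constraint $\{z\le0\text{ a.e.}\}$.

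Next the energy conditions \eqref{CE}. For \eqref{E1} I would first extract a priori bounds on a sublevel $\{\calE(t,\cdot)\le E\}$: the indicator $I_{[0,1]}$ in $\calJ$ together with \eqref{growthJ} bounds $\chi$ in $W^{1,r}(\Omega)$; \eqref{growthG} with $q_1\ge2$ bounds $D$ in $W^{1,q}\cap L^{q_1}$, hence in $L^2$; then $|\Xi(\chi,D)|\le|D|$ from \eqref{growthXi}, the lower bound $K_1$ in \eqref{assK}, Korn's inequality on $\Sph$ (legitimate since $\Gamma_\mathrm{D}$ is nonempty and relatively open) and the data bounds $C_F,C_\mathrm{D}$ bound $\uu$ in $\Sph$, after absorbing the linear force term $-\langle F(t),\uu\rangle_\Sph$ by Young's inequality. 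Weak lower semicontinuity of $\calE(t,\cdot)$ is then the crux: $\calJ$ and $\calG$ are weakly lsc by convexity in the top-order argument and continuity in the lower-order one; $\calH$ is even weakly continuous since $q_2<q_*$ gives $W^{1,q}\Subset L^{q_2}$; and the delicate quadratic term $\calW$ is handled by observing that $\bbK(\chi_k)\to\bbK(\chi)$ a.e.\ and boundedly (by \eqref{assK}) while $\Xi(\chi_k,D_k)\to\Xi(\chi,D)$ strongly in $L^2$ (by \eqref{CharaXi}, continuity of $\bbK$, and dominated convergence using \eqref{growthXi}), so that the coefficient field and the affine shift $e_\mathrm{D}(t)-\Xi$ converge strongly while $e(\uu_k)\rightharpoonup e(\uu)$ weakly, whence a standard weak lower semicontinuity result for quadratic functionals applies. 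For \eqref{E2} I would differentiate in time to get $\partial_t\calE(t,\mathbf q)=\int_\Omega(e(\uu){+}e_\mathrm{D}(t){-}\Xi):\bbK(\chi):\dot e_\mathrm{D}(t)\,\mathrm{d}x-\langle\dot F(t),\uu\rangle_\Sph$, which is $C^1$ in $t$ and, by Young's inequality together with the coercivity just established and the $C^1$-data bounds, is controlled linearly by $c_0+\calE(t,\mathbf q)$.

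Finally the compatibility conditions \eqref{CC}. Convergence of the power \eqref{CCa} follows along stable sequences from the explicit formula for $\partial_t\calE$: the strong convergences of $\chi_k$, $D_k$, $\Xi(\chi_k,D_k)$, $\bbK(\chi_k)$ and of the data $\dot e_\mathrm{D}(t_k)$, $\dot F(t_k)$ (from $C^1([0,T])$-regularity), paired with the weak convergences of $e(\uu_k)$ and $\uu_k$, let every product pass to the limit by weak--strong arguments. The genuine difficulty lies in \eqref{CCb}, closedness of the stable sets, which I would establish by the \emph{mutual recovery sequence} technique: given any competitor $\tilde{\mathbf q}=(\tilde\uu,\tilde\chi,\tilde D)$, construct $\tilde{\mathbf q}_k\to\tilde{\mathbf q}$ such that
\[
\limsup_{k\to\infty}\big[\calE(t_k,\tilde{\mathbf q}_k)-\calE(t_k,\mathbf q_k)+\calR(\tilde{\mathbf q}_k-\mathbf q_k)\big]\le\calE(t,\tilde{\mathbf q})-\calE(t,\mathbf q)+\calR(\tilde{\mathbf q}-\mathbf q),
\]
whence passing to the limit in the stability of $\mathbf q_k$ yields stability of $\mathbf q$ at $t$. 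The recovery sequence must be engineered to respect the unidirectionality constraint—e.g.\ via a correction of the type $\tilde\chi_k:=\tilde\chi-(\chi-\chi_k)$, truncated to $[0,1]$, ensuring $\chi_k-\tilde\chi_k\ge0$ a.e.\ and $\calR_{\mathrm{dam}}(\tilde\chi_k-\chi_k)$ convergent—as well as the convex constraints $\chi\in[0,1]$, $D\in K$, while keeping $\calR_{\mathrm{inel}}(\tilde D_k-D_k)$ under control, typically through a truncation/convex-combination construction exploiting the strong convergence of $D_k$. I expect this construction, and in particular its interaction with the $\chi$-dependence of $\bbK$ and $\Xi$ inside the elastic energy $\calW$, to be the main technical obstacle of the whole proof.
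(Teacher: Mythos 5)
Your overall route is the same as the paper's: check \eqref{CD} directly, establish coercivity and weak lower semicontinuity of $\calE$ (Korn's inequality plus positive definiteness of $\bbK$, the growth bounds for $\calG,\calJ,\calH$, and continuity of the Nemytskii operator $(\chi,D)\mapsto\Xi(\chi,D)$ by dominated convergence with majorant from \eqref{growthXi}), obtain \eqref{E2} from the explicit formula for $\partial_t\calW$ (where, as you implicitly note, $\calG$ must be added to absorb the $\|D\|_{L^2}^2$-term), get \eqref{CCa} by weak--strong convergence along stable sequences, and reduce \eqref{CCb} to a mutual recovery sequence. This is precisely the content of Lemmata \ref{ContXi}--\ref{PropsHJG}, Corollary \ref{BPropsE} and the final Proposition of the paper.

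The genuine gap is in the one step you yourself flag as the main obstacle, the mutual recovery sequence for the damage variable. Your concrete proposal $\hat\chi_k:=\hat\chi-(\chi-\chi_k)$, truncated to $[0,1]$, does preserve the unidirectionality constraint, but the required inequality $\limsup_{k\to\infty}\bigl(\calJ(\hat\chi_k)-\calJ(\chi_k)\bigr)\leq\calJ(\hat\chi)-\calJ(\chi)$ then fails in the paper's generality: the additive shift gives $\nabla\hat\chi_k=\nabla\hat\chi+\nabla\chi_k-\nabla\chi$ with $\nabla\chi_k$ only \emph{weakly} convergent in $L^r$, and the cancellation that makes the shifted construction work, namely $|a+b|^2-|b|^2=|a|^2+2\,a\cdot b$ with the cross term passing to the limit by weak convergence, is available only for a quadratic gradient term, i.e.\ $r=2$. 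The assumptions \eqref{propJ} allow any $r\in(1,\infty)$ and any $\tilde J(\chi,\cdot)$ convex in the gradient, and for these the paper uses instead the truncation-type construction $\hat\chi_k:=\min\{\chi_k,\max\{0,\hat\chi-\delta_k\}\}$ with $\delta_k\to0$ calibrated to $\|\chi_k-\chi\|_\Spx$, delegating the limsup inequality \eqref{propmrschi} to \cite{ThoMie09DNEM} and \cite[Thm.\ 4.5]{LRTT14RDTM}; without this (or an equivalent device) the closedness of the stable sets is not proved. Conversely, you over-engineer the plastic variable: since $R_\mathrm{inel}$ carries no irreversibility constraint and $D_k\to D$ strongly in $L^{q_1}\cap L^{q_2}$ (hence in $L^1$) by compact embedding, the constant choices $\hat D_k:=\hat D$ and $\hat\uu_k:=\hat\uu$ already work, giving $\calR_\mathrm{inel}(\hat D_k-D_k)\to\calR_\mathrm{inel}(\hat D-D)$ and $\calG(\hat D_k)=\calG(\hat D)$ with no truncation or convex combination needed; the remaining terms $\calW$ and $\calH$ along the recovery sequence converge by dominated convergence, while the terms evaluated at $(\uu_k,\chi_k,D_k)$ are handled by weak lower semicontinuity.
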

%
\section{Proof of Theorem \ref{EnSolanisoEx}} 
\label{AnaProp}
%
It is immediate to observe that the  dissipation distance generated by the potential $\calR$ from \eqref{defR} 
via formula \eqref{D-defined} 
satisfies the abstract condition \eqref{CD}. Thus, it remains to verify 
that the energy functional $\calE$ from \eqref{defE}
satisfies the basic properties \eqref{CE}. In addition, the compatibility conditions \eqref{CC} 
have to be deduced.   
\par
To this aim, we start with verifying the following regularity property for the inelastic strain.
\begin{lemm}
\label{ContXi}
Let \eqref{propXi} and \eqref{assq} hold true, let 
$\alpha\in[1,\infty)$. Then, 
$\Xi: 
\Spm 
\times L^2(\Omega;\Sps)\to L^2(\Omega;\Sps)$ is continuous wrt.\ the 
$L^{\alpha}(\Omega) \times  L^2(\Omega;\Sps)$-topology. 
\end{lemm}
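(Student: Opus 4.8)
The plan is to recognize that $\Xi$ acts through the associated Nemytskii (superposition) operator $(\chi,D)\mapsto \Xi(\chi(\cdot),D(\cdot))$, and to establish its continuity by the standard combination of the subsequence principle, a.e.\ convergence, pointwise continuity of $\Xi$, and a domination argument. Concretely, I take sequences $\chi_n\to\chi$ in $L^\alpha(\Omega)$ with $\chi_n,\chi\in\Spm$ (so all take values in $[0,1]$) and $D_n\to D$ in $L^2(\Omega;\Sps)$, and aim to show $\Xi(\chi_n,D_n)\to\Xi(\chi,D)$ in $L^2(\Omega;\Sps)$.

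First I would reduce to a subsequence argument: it suffices to prove that every subsequence of $(\chi_n,D_n)$ admits a further subsequence along which $\Xi(\chi_n,D_n)\to\Xi(\chi,D)$ in $L^2$, since the limit is always the same and this forces convergence of the whole sequence. Fixing an arbitrary subsequence (not relabeled), I use that convergence in $L^\alpha$ and in $L^2$ each imply convergence in measure; hence I can extract a further (not relabeled) subsequence such that $\chi_n\to\chi$ and $D_n\to D$ a.e.\ in $\Omega$, and --- invoking the standard fact that an $L^2$-convergent sequence has a subsequence dominated by a fixed $g\in L^2(\Omega)$ --- such that additionally $|D_n|\le g$ a.e.\ for some $g\in L^2(\Omega)$.

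The two structural assumptions on $\Xi$ then close the argument. By the pointwise continuity \eqref{CharaXi}, $\Xi(\chi_n(x),D_n(x))\to\Xi(\chi(x),D(x))$ for a.e.\ $x\in\Omega$ (using $\chi_n(x),\chi(x)\in[0,1]$). By the growth bound \eqref{growthXi}, which follows from \eqref{monoXi}, we have the domination $|\Xi(\chi_n(x),D_n(x))|\le|D_n(x)|\le g(x)$ a.e., with $g\in L^2$. Lebesgue's dominated convergence theorem then yields $\Xi(\chi_n,D_n)\to\Xi(\chi,D)$ in $L^2(\Omega;\Sps)$ along this subsequence, and the subsequence principle upgrades this to convergence of the full sequence. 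Note that the value of $\alpha\in[1,\infty)$ enters only through guaranteeing $\chi_n\to\chi$ in measure, so any such $\alpha$ works; the truly essential ingredients are the continuity and the linear-in-$|D|$ growth of $\Xi$.

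The step I expect to require the most care is the domination, i.e.\ passing from $L^2$-convergence of $D_n$ to an $L^2$ envelope $g$ bounding $|D_n|$ along a subsequence. A clean alternative would be to avoid an explicit envelope and instead invoke Vitali's convergence theorem: since $D_n\to D$ in $L^2$, the family $\{|D_n|^2\}$ is uniformly integrable, whence $\{|\Xi(\chi_n,D_n)|^2\}$ is uniformly integrable by \eqref{growthXi}, and together with the a.e.\ convergence this gives the $L^2$-convergence directly. Either route is routine, so I anticipate no genuine obstacle beyond the bookkeeping of the nested subsequence extractions.
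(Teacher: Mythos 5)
Your proof is correct and follows essentially the same route as the paper: extract an a.e.\ convergent subsequence, use the pointwise continuity \eqref{CharaXi} and the bound $|\Xi(\chi,D)|\le|D|$ from \eqref{monoXi}, and conclude by dominated convergence. The only (immaterial) difference is that the paper uses $|D_k|^2$ as a \emph{convergent} majorant in the generalized dominated convergence theorem, whereas you extract a fixed $L^2$ envelope $g$ (or invoke Vitali), and you make the subsequence principle explicit where the paper leaves it implicit.
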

\begin{proof}
Consider $  (\chi_k,D_k)_k
 \subset\Spm\times L^2(\Omega;\Sps)$ such that  
$(\chi_k,D_k)_k\to(\chi,D)$ in 
$L^\alpha(\Omega)\times L^2(\Omega;\Sps)$. Hence, up to a subsequence we find that 
   $(\chi_k,D_k)_k\to(\chi,D)$  
pointwise a.e.\ in $\Omega$. Thanks to \eqref{CharaXi} we find that 
$|\Xi(\chi_k,D_k)|^2\to |\Xi(\chi,D)|^2$ 
pointwise a.e.\ in $\Omega$. 
Moreover, \eqref{monoXi} ensures  
$|\Xi(\chi_k,D_k)|^2\leq|\Xi(0,D_k)|^2=|D_k|^2$ for all $k\in\N,$ 
which thus serves as a convergent majorant. 
Hence, $\Xi(\chi_k,D_k) \to \Xi(\chi,D) $ in $L^2(\Omega;\R^{d\times d})$  
by the dominated convergence theorem. 
\end{proof}
The above continuity property is an important ingredient for the verification of 
the following properties of the functional $\calW:$ 
\begin{lemm}[Properties of $\calW$]
\label{BPropsW}
The functional  $\calW: [0,T]  \times  \Sph\times \Spm \times L^{2}(\Omega;\Sps) \to\R$ 
from \eqref{defW} 
has the following properties: 
\begin{eqnarray}
\nonumber 
\text{bound from below: }\hspace*{-5mm}&&\exists\,c_W,\tilde c_W,C_W>0,\,
\forall(t,\uu,\chi,D)\in[0,T]\times \Sph \times (\Spx\cap\Spm)\times\Spu:\qquad\\
\label{Wbd}
&&\calW(t,\uu,\chi,D)\geq c_W\|\uu\|_\Sph^2
-\tilde c_W\|D\|_{L^2(\Omega;\R^{d\times d})}^2-C_W\,,\\
\nonumber 
\hspace*{10mm}
\text{lower semicontinuity: }\hspace*{-5mm}&&\calW\text{ is lower semicontinuous wrt.\ the  weak convergence in }
\Sph\\
\label{Wlsc}
&&\text{and strong convergence in }L^\alpha(\Omega)\times L^2(\Omega;\Sps) 
\text{ for all } \alpha \in [1,\infty). \qquad
\end{eqnarray} 
Moreover, for all 
$(t,\uu,\chi,D)\in [0,T] \times\Sph\times\Spx\cap\Spm\times(\Spu\cap\Spu_1)$ 
the partial time-derivative $\partial_t\calW$ 
is given by 
\begin{equation}
\label{dtW}
\partial_t\calW(t,\uu,\chi,D):=\int_\Omega\big(e(\uu)+e_\mathrm{D}(t)-\Xi(\chi,D)\big):\mathbb{K}(\chi):
\partial_t e_\mathrm{D}(t) 
\,\mathrm{d}x-\langle F_t(t),\uu\rangle_\Sph
\end{equation}
and $\calW+\calG$ satisfies relation \eqref{E2}. 
\end{lemm}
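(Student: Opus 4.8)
The plan is to prove the three assertions separately, throughout exploiting the ellipticity and continuity of $\bbK$ from \eqref{assK}, the bound $|\Xi(\chi,D)|\le|D|$ from \eqref{growthXi}, and the $L^2$-continuity of $\Xi$ established in Lemma \ref{ContXi}. For the \emph{bound from below}, I would use the lower ellipticity estimate $e:\bbK(\chi):e\ge K_1|e|^2$ to control the quadratic part of $\calW$ from below by $\tfrac{K_1}{2}\io|e(\uu)+e_\mathrm{D}(t)-\Xi(\chi,D)|^2\dd x$, then apply the elementary inequality $|a+b|^2\ge\tfrac12|a|^2-|b|^2$ with $a=e(\uu)$ and $b=e_\mathrm{D}(t)-\Xi(\chi,D)$. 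Combined with $|\Xi(\chi,D)|\le|D|$ and $\nor{e_\mathrm{D}(t)}{L^2}\le C_\mathrm{D}$ from \eqref{given}, this gives a lower bound of the form $\tfrac{K_1}{4}\nor{e(\uu)}{L^2}^2-\tilde c_W\nor{D}{L^2}^2-C$. Korn's inequality (licit because $\Gamma_\mathrm{D}$ is nonempty, see \eqref{ass-dom}) turns $\nor{e(\uu)}{L^2}^2$ into full control of $\nor{\uu}{\Sph}^2$, while the linear term $-\langle F(t),\uu\rangle_\Sph$ is absorbed via Young's inequality using $\nor{F}{C^1([0,T];\Sph^*)}\le C_F$, yielding \eqref{Wbd}.

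For \emph{lower semicontinuity}, I would take $\uu_k\rightharpoonup\uu$ in $\Sph$ and $(\chi_k,D_k)\to(\chi,D)$ strongly in $L^\alpha(\Omega)\times L^2(\Omega;\Sps)$. The linear term is weakly continuous, so only the quadratic form needs attention. By Lemma \ref{ContXi}, $\Xi(\chi_k,D_k)\to\Xi(\chi,D)$ in $L^2$, hence $w_k:=e(\uu_k)+e_\mathrm{D}(t)-\Xi(\chi_k,D_k)\rightharpoonup w:=e(\uu)+e_\mathrm{D}(t)-\Xi(\chi,D)$ weakly in $L^2$; moreover, up to a subsequence $\bbK(\chi_k)\to\bbK(\chi)$ a.e.\ and boundedly by continuity of $\bbK$. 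The crucial step is the tangent-line (convexity) inequality for the positive-definite form, $w_k:\bbK(\chi_k):w_k\ge 2w_k:\bbK(\chi_k):w-w:\bbK(\chi_k):w$. Integrating and passing to the limit, the weak--strong pairing $\int w_k:(\bbK(\chi_k):w)\to\int w:(\bbK(\chi):w)$ (valid since $\bbK(\chi_k):w\to\bbK(\chi):w$ strongly in $L^2$ by dominated convergence) together with $\int w:\bbK(\chi_k):w\to\int w:\bbK(\chi):w$ yields $\liminf_k\tfrac12\int w_k:\bbK(\chi_k):w_k\ge\tfrac12\int w:\bbK(\chi):w$, which is \eqref{Wlsc}. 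I expect this to be the main obstacle, precisely because $e(\uu_k)$ converges only weakly while the coefficient $\bbK(\chi_k)$ itself varies; the resolution is the convexity inequality above combined with the strong $L^2$-convergence of $\Xi(\chi_k,D_k)$ supplied by Lemma \ref{ContXi}.

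For the \emph{time derivative and} \eqref{E2}, since $\uu,\chi,D$ are frozen and $t\mapsto(e_\mathrm{D}(t),F(t))$ is $C^1$ by \eqref{given}, I would differentiate the smooth quadratic integrand under the integral sign and use the symmetry of $\bbK(\chi)$ to obtain formula \eqref{dtW}, continuity of $t\mapsto\partial_t\calW$ following in the same way. As $\calG$ is $t$-independent, $\partial_t(\calW+\calG)=\partial_t\calW$. From \eqref{dtW}, Cauchy--Schwarz, $\|\bbK(\chi)\|\le K_2$, $\nor{\partial_t e_\mathrm{D}}{L^2}\le C_\mathrm{D}$, $\nor{F_t}{\Sph^*}\le C_F$ and $|\Xi|\le|D|$ I obtain $|\partial_t\calW|\le C(1+\nor{\uu}{\Sph}^2+\nor{D}{L^2}^2)$. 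Finally, the lower bound \eqref{Wbd} together with the coercivity of $\calG$ --- which, since $q_1\ge2$ in \eqref{growthG}, dominates $\nor{D}{L^2}^2$ (after Young's inequality) and thereby absorbs the $-\tilde c_W\nor{D}{L^2}^2$ term --- shows that $\calW+\calG$ controls $1+\nor{\uu}{\Sph}^2+\nor{D}{L^2}^2$; combining the two estimates delivers $|\partial_t\calW|\le c_1(c_0+\calW+\calG)$, i.e.\ relation \eqref{E2}.
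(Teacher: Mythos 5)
Your proposal is correct, and for two of the three assertions it follows the paper's own path: the lower bound \eqref{Wbd} is obtained in the paper exactly as you do it (positive definiteness from \eqref{assK}, the estimate \eqref{growthXi}, Korn's inequality on $\Sph$, Young's inequality to absorb $\langle F(t),\uu\rangle_\Sph$), and for \eqref{dtW}--\eqref{E2} the paper likewise differentiates under the integral sign and runs the chain $|\partial_t\calW|\leq C\|\uu\|_\Sph^2+c\|D\|_{L^2}^2+C_3\leq C_5(\calW+\calG)+C_6$, absorbing the $\|D\|_{L^2}^2$-term into $\calG$ through \eqref{Gbdd} and $q_1\geq 2$, just as you propose.

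Where you genuinely diverge is the lower semicontinuity \eqref{Wlsc}. The paper does not argue by hand: it observes that the integrand is continuous with respect to strong convergence in $(\chi,D)$ (via Lemma \ref{ContXi} and \eqref{growthXi}), convex in $\uu$, and bounded below by $-K_1\int_\Omega|D|\,|e(\uu)+e_\mathrm{D}(t)|\,\mathrm{d}x-\langle F(t),\uu\rangle_\Sph$, and then invokes the abstract weak--strong lower semicontinuity theorem \cite[p.~492, Thm.~7.5]{FoLeo07}. You instead unpack that theorem into an elementary, self-contained argument: the tangent-line inequality $w_k:\bbK(\chi_k):w_k\geq 2\,w_k:\bbK(\chi_k):w-w:\bbK(\chi_k):w$ (valid by symmetry and positive semidefiniteness of $\bbK(\chi_k)$), plus the weak--strong pairing enabled by the strong $L^2$-convergence $\Xi(\chi_k,D_k)\to\Xi(\chi,D)$ of Lemma \ref{ContXi} and dominated convergence for $\bbK(\chi_k):w$. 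Your route makes explicit exactly which structural facts are used, at the price of some bookkeeping that the citation delegates to the quoted theorem; both are sound. Two small points of hygiene, neither a genuine gap: since lower semicontinuity concerns the full sequence, you should first pass to a subsequence realizing the $\liminf$ and only then extract the a.e.-convergent subsequence on which the tangent-line argument is run; and your appeal to the continuity of $\chi\mapsto\bbK(\chi)$ is not contained in \eqref{assK} but is exactly the paper's standing smoothness assumption on $\bbK$ from Section \ref{Conti}, which the paper's own proof (and the applicability of the Fonseca--Leoni theorem) uses implicitly as well.
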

\begin{proof}
We split the proof in several steps.

{\bf Ad bound from below \eqref{Wbd}: } 
Thanks to the positive definiteness of $\mathbb{K},$ the bounds 
on the we given data $F,g,$ cf.\ \eqref{given}, and the properties of $\Xi$ from \eqref{propXi}, 
also using Young's and Korn's inequality  as well as estimate \eqref{growthXi},    
we find for all 
$(t,\uu,\chi,D)\in [0,T] \times \Sph\times(\Spx\cap\Spm)\times(\Spu\cap\Spu_1)$ 
\begin{eqnarray*}
\calW(t,\uu,\chi,D)&\geq&\|e(\uu)+e_\mathrm{D}-\Xi(\chi,D)\|_{L^2(\Omega;\R^{d\times d})}^2
-C_F\|\uu\|_\Sph\\
&\geq&\big(\|e(\uu)\|_{L^2}-(C_\mathrm{D}+\|\Xi(\chi,D)\|_{L^2})\big)^2-C_F\|\uu\|_\Sph\\
&\stackrel{\eqref{growthXi}}{\geq} &\tfrac{1}{2}\|e(\uu)\|_{L^2}^2-8(C_\mathrm{D}+\|D\|_{L^2})^2-C_F\|\uu\|_\Sph\\
&\geq&\tfrac{C_\mathrm{K}^2}{2}\|\uu\|_{\Sph}^2-16(C_\mathrm{D}^2+\|D\|_{L^2}^2)
-\tfrac{C_\mathrm{K}^2}{4}\|\uu\|_\Sph^2-\tfrac{4}{C_\mathrm{K}^2}C_F^2\,,
\end{eqnarray*}
using the short-hand $\|\cdot\|_{L^2}$ for $\|\cdot\|_{L^2(\Omega;\R^{d\times d})}$, and 
with $C_\mathrm{K}$ the constant in Korn's inequality.  This proves \eqref{Wbd}.  
\par 
{\bf Ad lower semicontinuity \eqref{Wlsc}: }
For every $t\in[0,T]$ we observe that $\calW(t,\cdot,\cdot,\cdot)$ 
is continuous wrt.\  the strong convergence in 
$L^2(\Omega;\R^{d\times d})\times L^\alpha(\Omega)\times L^2(\Omega;\Sps)$, 
also due to estimate \eqref{growthXi}. 
Moreover, for every $(t,\chi,D)$ in $[0,T]\times (L^\alpha(\Omega)\cap\Spm)\times L^2(\Omega;\Sps)$ 
the functional $\calW(t,\cdot, \chi,D):\Sph\to\R$ is convex and we have 
$\calW(t, \uu, \chi,D)\geq
-  K_1   \int_\Omega |D| | e(\uu)+e_\mathrm{D}(t) |  \mathrm{d}x-\langle F(t),\uu(t)\rangle_\Sph$. 
Hence, \cite[p.\ 492, Thm.\ 7.5]{FoLeo07} guarantees 
the lower semicontinuity statement \eqref{Wlsc}.   
\par 
{\bf Ad \eqref{dtW} \& relation \eqref{E2}: }
Formula  \eqref{dtW} ensues from a direct calculation, taking into account that
$\calW(t, \cdot,\chi,D)$  is Fr\'echet-differentiable in $L^2(\Omega;\R^{d})$,  as well as  
the regularity properties of $F $ and $\uu_\mathrm{D},$ cf.\ \eqref{given}. 
Note now that $\partial_t(\calW+\calG)=\partial_t\calW$. 
In order to find the bound \eqref{E2} on $|\partial_t\calW(t,\uu,\chi,D)|$ we make use 
of the growth properties of 
$\mathbb{K},$ cf.\ \eqref{given}, H\"older's and Young's 
inequality, and exploit the already deduced bound \eqref{Wbd}. This yields 
\begin{eqnarray*}
\big|\partial_t\calW(t,\uu,\chi,D)\big|
&\leq& 
K_2C_\mathrm{D}\big(\|e(\uu)\|_{L^2}^2+ \|e_\mathrm{D}\|_{L^2}^2 
+\|\Xi(\chi,D)\|_{L^2}^2+2\big)
+\tfrac{1}{2}(C_F^2+\|\uu\|_\Sph^2)\\
&\leq& C\|\uu\|_\Sph^2+c\|D\|_{L^2}^2+C_3\\
&\leq& \tilde C\calW(t,\uu,\chi,D)+\tilde c\|D\|_{L^{q_1}}^{q_1}+C_4\\
&\leq& C_5\big(\calW(t,\uu,\chi,D)+\calG(D)\big)+C_6\,, 
\end{eqnarray*}
where the last estimate follows from the coercivity properties of $\calG$, cf.\ \eqref{Gbdd}. 
This  finishes the proof of \eqref{E2}. 
\end{proof}
\begin{lemm}[Weak lower semicontinuity]
\label{Genwlsc}
Let $\mathbf{B}_1\subset\mathbf{B}_2$ with a continuous embedding 
be separable Banach spaces and $\mathbf{B}_1$ reflexive. 
Assume that the functional $\calE_1:\mathbf{B}_1\to\R$ is weakly sequentially 
lower semicontinuous and coercive. Then the extended functional $\calE_2:\mathbf{B}_2\to\R$ is also 
weakly sequentially lower semicontinuous, where 
\begin{equation}
\calE_2(v):=\left\{
\begin{array}{ll}
\calE_1(v)&\text{if }v\in \mathbf{B}_1,\\
\infty&\text{if }v\in \mathbf{B}_2\backslash\mathbf{B}_1.
\end{array}
\right. 
\end{equation}
\end{lemm}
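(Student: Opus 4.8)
The plan is to reduce weak lower semicontinuity of $\calE_2$ on $\mathbf{B}_2$ to the assumed weak lower semicontinuity of $\calE_1$ on $\mathbf{B}_1$, using reflexivity of $\mathbf{B}_1$ to recover a weakly convergent subsequence in the smaller space and the continuity of the embedding to match the two weak limits. First I would fix a sequence $v_k\rightharpoonup v$ weakly in $\mathbf{B}_2$ and set $\ell:=\liminf_{k\to\infty}\calE_2(v_k)$. If $\ell=+\infty$ there is nothing to prove, so I assume $\ell<\infty$ and pass (without relabelling) to a subsequence along which $\calE_2(v_k)\to\ell$ and $\calE_2(v_k)\le\ell+1$ for every $k$. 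In particular $\calE_2(v_k)<\infty$, which by the very definition of $\calE_2$ forces $v_k\in\mathbf{B}_1$ with $\calE_2(v_k)=\calE_1(v_k)$.

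Next I would invoke coercivity: since $\calE_1(v_k)=\calE_2(v_k)\le\ell+1$ is uniformly bounded, coercivity of $\calE_1$ on $\mathbf{B}_1$ yields a uniform bound $\|v_k\|_{\mathbf{B}_1}\le C$. As $\mathbf{B}_1$ is reflexive, bounded sequences are relatively weakly sequentially compact, so along a further subsequence $v_{k_j}\rightharpoonup w$ weakly in $\mathbf{B}_1$ for some $w\in\mathbf{B}_1$.

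The crucial step is the identification $w=v$. Because the embedding $\iota:\mathbf{B}_1\to\mathbf{B}_2$ is bounded and linear, it is continuous from the weak topology of $\mathbf{B}_1$ to that of $\mathbf{B}_2$: for any $g\in\mathbf{B}_2^*$ one has $g\circ\iota\in\mathbf{B}_1^*$, whence $\langle g,v_{k_j}\rangle_{\mathbf{B}_2}\to\langle g,w\rangle_{\mathbf{B}_2}$, i.e.\ $v_{k_j}\rightharpoonup w$ also in $\mathbf{B}_2$. On the other hand $v_{k_j}\rightharpoonup v$ in $\mathbf{B}_2$ as a subsequence of the original sequence, and since weak limits in the Banach space $\mathbf{B}_2$ are unique, I conclude $w=v$; in particular $v\in\mathbf{B}_1$. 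I expect this matching of the two weak limits across the embedding to be the only genuinely delicate point of the argument.

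Finally, applying weak lower semicontinuity of $\calE_1$ along $v_{k_j}\rightharpoonup v$ in $\mathbf{B}_1$ gives $\calE_2(v)=\calE_1(v)\le\liminf_j\calE_1(v_{k_j})=\liminf_j\calE_2(v_{k_j})=\ell=\liminf_k\calE_2(v_k)$, which is the desired inequality. Everything besides the limit identification is a routine combination of coercivity, reflexive weak sequential compactness, and the lower semicontinuity hypothesis on $\mathbf{B}_1$. I would also remark that separability of the spaces is not in fact needed here, the reflexivity of $\mathbf{B}_1$ already providing the required weak sequential compactness via the Eberlein--\v{S}mulian theorem.
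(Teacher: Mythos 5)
Your proof is correct and follows essentially the same route as the paper's: coercivity gives a bound in $\mathbf{B}_1$, reflexivity gives a weakly convergent subsequence, the continuous embedding plus uniqueness of weak limits identifies the limit with $v$, and then the lower semicontinuity of $\calE_1$ concludes. Your organization (extracting a subsequence realizing the liminf at the outset) is in fact a slightly cleaner bookkeeping than the paper's case analysis, and your closing remark that separability is not needed is accurate, but the substance of the argument is identical.
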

\begin{proof}
Consider a sequence $v_k\rightharpoonup v$ in $\mathbf{B}_2$. 
If $v_k\in\mathbf{B}_2\backslash\mathbf{B}_1$ for $k\in\N$ except of a 
finite number of indices, then $\calE_2(v)\leq\liminf_{k\to\infty}\calE_2(v_k)=\infty$. 
Also, if $\|v_k\|_{\mathbf{B}_1}\to\infty$ for any subsequence, 
then $\calE_2(v)\leq\liminf_{k\to\infty}\calE_2(v_k)=\infty$. 
Hence, in these two cases there is nothing to prove. 
Thus, assume that $\|v_k\|_{\mathbf{B}_1}\leq C$ for 
a not relabelled subsequence $(v_k)_k$ and a constant $C>0$. 
From the reflexivity of $\mathbf{B}_1$ we now conclude that there is a further, 
not relabelled subsequence and an element $\tilde v\in\mathbf{B}_1$ such that 
$v_k\rightharpoonup \tilde v$ in $\mathbf{B}_1$. By the uniqueness of 
the limit in $\mathbf{B}_2\supset\mathbf{B}_1$ we have that $\tilde v=v$. 
Now the weak lower semicontinuity of $\calE_1:\mathbf{B}_1\to\R$ implies that 
$\calE_2(v)=\calE_1(v)\leq\liminf_{k\to\infty}\calE_1(v_k)\leq\liminf_{k\to\infty}\calE_2(v_k),$ 
which proves the assertion.   
\end{proof}
\begin{lemm}[Properties of $\calH,$ $\calJ,$ and $\calG$]
\label{PropsHJG}
Let the coupling term $\calH$ be defined as in \eqref{defH} and let $\alpha\in[1,\infty)$. 
Then $\calH$ has the following properties: 
\begin{subequations}
\label{propsH}
\begin{eqnarray}
\label{Hdom}
\text{proper domain: }\hspace*{-5mm}&&\mathop{\mathrm{dom}}\calH=\Spm\times\Spv\,,\\
\label{Hbdd}
\text{bound from below: }\hspace*{-5mm}&&\forall(\chi,D)\in\Spz\times\Spv:\;
\calH(\chi,D)\geq 0\,,\\
 \label{Hlsc}
\hspace*{10mm}
\text{lower semicontinuity: }\hspace*{-5mm}&&\calH:\Spz\times\Spv\to[0,\infty]
\text{ is lower semicontinuous wrt.\ the}\\
\nonumber 
&&\text{strong convergence in }L^\alpha(\Omega)\times L^{q_2}(\Omega,\R^{d\times d})
\; \text{for every } \alpha \in [1,\infty)\,.  
\end{eqnarray}
\end{subequations}
Let the damage regularization $\calJ$ be given by \eqref{defJ}. Then, the following properties hold true: 
\begin{subequations}
\label{propsJ}
\begin{eqnarray}
\label{domJ}
\text{proper domain: }\hspace*{-5mm}&&\mathop{\mathrm{dom}}\calJ=(\Spx\cap\Spm)\,,\\
\label{Jbdd}
\hspace*{10mm}
\text{bound from below: }\hspace*{-5mm}&&\exists\, c_\calJ,C_\calJ>0,\,\forall\chi\in(\Spx\cap\Spm):\;
\calJ(\chi)\geq c_\calJ\|\chi\|_\Spx^r-C_\calJ\,,\\
 \label{Jlsc}
\text{lower semicontinuity: }\hspace*{-5mm}&&\calJ\text{ is lower semicontinuous wrt.\  the weak convergence in }
\Spx\,.
\end{eqnarray}
\end{subequations}
Let the plastic regularization $\calG$ be given as in \eqref{defG}. Then, the following properties are satisfied: 
\begin{subequations}
\label{propsG}
\begin{eqnarray}
\label{domG}
\text{proper domain:}\hspace*{-3mm}&&\mathop{\mathrm{dom}}\calG\subset(\Spu\cap\Spu_1)
\text{ is a closed, convex subset}\,,\\
\label{Gbdd}
\text{bound from below:}\hspace*{-3mm}&&\exists\,c_\calG,C_\calG>0,\,
\forall D\in\mathop{\mathrm{dom}}\calG:\;
\calG(D)\geq c_\calG \|D\|_\Spu^q-C_\calG\,,\\
&&
\nonumber
\calG(D)\geq c_\calG\|D\|_{\Spu_1}^{q_1}-C_\calG\,,\\ 
\label{Glsc}
\hspace*{7mm}
\text{lower semicontinuity:}\hspace*{-3mm}
&&\calG  \text{ is lower semicontinuous wrt.\ weak convergence in }
(\Spu\cap\Spu_1)\,.
\end{eqnarray}
\end{subequations}
Hence, the functionals 
$\calJ:\Spz\to\R_\infty$  
and $\calG:\Spv\to\R_\infty$ 
are weakly sequentially 
lower semicontinuous. 
\end{lemm}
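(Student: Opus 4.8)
The plan is to verify the three groups of properties for $\calH$, $\calJ$, and $\calG$ separately — using the lower growth bounds for coercivity and the upper growth bounds for finiteness — and then to deduce the final weak lower semicontinuity statements from the extension Lemma \ref{Genwlsc}. For the coupling term $\calH$, the bound from below \eqref{Hbdd} is immediate from the nonnegativity of $H$ built into \eqref{growthH}. The proper domain \eqref{Hdom} follows since $\calH<\infty$ forces $\chi\in\Spm$ through the definition \eqref{defH1}, while for such $\chi$ and $D\in\Spu\cap\Spu_1$ the upper bound $H(\chi,D)\leq C_H(|D|^{q_2}+1)$ with $q_2<q_*$ together with the Sobolev embedding $\Spu\hookrightarrow L^{q_*}$ make the integral finite. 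The lower semicontinuity \eqref{Hlsc}, which is claimed only for \emph{strong} convergence in $L^\alpha(\Omega)\times L^{q_2}(\Omega;\R^{d\times d})$, I would obtain by passing to an a.e.\ convergent subsequence, using the continuity \eqref{contiH} of $H$ to pass pointwise to the limit, and invoking Fatou's lemma, which applies precisely because $H\geq 0$.

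The regularizations $\calJ$ and $\calG$ have a parallel structure. Their domains \eqref{domJ}, \eqref{domG} come from the indicator parts — $I_{[0,1]}$ forcing $\chi\in\Spm$, and $I_K$ forcing $D\in K$ a.e.\ with $K$ closed and convex, which yields a closed convex admissible set — combined with the upper growth bounds in \eqref{growthJ}, \eqref{growthG} that make $\int\tilde J$ finite exactly on $\Spx$ and $\int\tilde G$ finite exactly on $\Spu\cap\Spu_1$. The bounds from below \eqref{Jbdd}, \eqref{Gbdd} follow by integrating the lower growth bounds: one gets $\int\tilde J(\chi,\nabla\chi)\geq c_J\|\nabla\chi\|_{L^r}^r-c_J\tilde c_J|\Omega|$, and since $\chi\in[0,1]$ controls $\|\chi\|_{L^r}^r\leq|\Omega|$, this upgrades to coercivity in the full norm of $\Spx$; likewise $\int\tilde G\geq c_G\big(\|\nabla D\|_{L^q}^q+\|D\|_{L^{q_1}}^{q_1}\big)-c_G\tilde c_G|\Omega|$ yields coercivity simultaneously in $\Spu$ and in $\Spu_1$, using $q\leq q_1$ and $|\Omega|<\infty$ to dominate $\|D\|_{L^q}$ by $\|D\|_{L^{q_1}}$.

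The heart of the matter is the weak lower semicontinuity \eqref{Jlsc}, \eqref{Glsc}. Given $\chi_k\rightharpoonup\chi$ in $\Spx$ (resp.\ $D_k\rightharpoonup D$ in $\Spu\cap\Spu_1$), the compact Sobolev embedding — the relevant compactness for $\calG$ being exactly \eqref{assq} — gives, along a subsequence, strong convergence in $L^r$ (resp.\ $L^2$) and hence a.e.\ convergence of the \emph{state}, while the weak convergence of the \emph{gradient} persists in $L^r$ (resp.\ $L^q$). The constraints $[0,1]$ and $K$ pass to the limit because they are convex and closed, hence weakly closed. For the $\tilde J$- and $\tilde G$-parts, the continuity \eqref{contiJ}, \eqref{contiG}, the convexity in the gradient variable \eqref{convexJ}, \eqref{convexG}, and the one-sided growth place us in the setting of the classical lower semicontinuity theorem for normal integrands that are convex in the highest-order argument, for which I would cite the same result already used for $\calW$, namely \cite[p.\ 492, Thm.\ 7.5]{FoLeo07}. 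I expect this to be the main obstacle: since the integrands are only \emph{continuous}, and possibly nonconvex, in the state variable, one cannot invoke joint convexity of the whole integrand, but must first upgrade the state convergence to an a.e.\ convergence by compactness before applying the normal-integrand result.

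Finally, the concluding assertion that $\calJ:\Spz\to\R_\infty$ and $\calG:\Spv\to\R_\infty$ are weakly sequentially lower semicontinuous would follow from Lemma \ref{Genwlsc}, applied with $\mathbf{B}_1=\Spx$, $\mathbf{B}_2=\Spz$ in the first case and $\mathbf{B}_1=\Spu\cap\Spu_1$, $\mathbf{B}_2=\Spv$ in the second. In each case $\mathbf{B}_1$ is a reflexive separable Banach space continuously embedded in the separable space $\mathbf{B}_2$, and the coercivity (\eqref{Jbdd}, \eqref{Gbdd}) together with the weak lower semicontinuity on $\mathbf{B}_1$ (\eqref{Jlsc}, \eqref{Glsc}) are exactly the hypotheses required by that lemma.
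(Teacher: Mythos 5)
Your proposal is correct and takes essentially the same route as the paper: the domains and coercivity bounds \eqref{Hdom}--\eqref{Gbdd} are read off from the definitions and the growth conditions, the weak lower semicontinuity \eqref{Jlsc}, \eqref{Glsc} rests on the convex-in-the-gradient lower semicontinuity theorem \cite[p.\ 492, Thm.\ 7.5]{FoLeo07}, and the final statement follows from Lemma \ref{Genwlsc} with exactly the spaces you identify. The only (harmless) deviation is in \eqref{Hlsc}: you invoke Fatou's lemma using $H\geq 0$, whereas the paper extracts an a.e.\ convergent subsequence and applies the dominated convergence theorem with the convergent majorant $C_H(|D_k|^{q_2}+1)$; your variant is slightly more economical and equally valid.
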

\begin{proof}
We split the proof in several steps.
\par 
{\bf Ad \eqref{propsH}: }The domain property \eqref{Hdom} and the boundedness from below are a direct 
consequence of definition \eqref{defH1} and \eqref{growthH}. The lower semicontinuity can be concluded from 
the continuity \eqref{contiH} and the growth property \eqref{growthH} as follows. 
Given a sequence $(\chi_k,D_k)_k\subset(\Spz\backslash\Spm)\times\Spv$, with 
$(\chi_k,D_k)\to(\chi,D)$ in $L^\alpha(\Omega)\times  L^{q_2}(\Omega,\R^{d\times d})$, 
we immediately find that 
$\infty = \liminf_{k\to\infty}\calH(\chi_k,D_k)\geq\calH(\chi,D)$, while  no matter occurs 
if $(\chi,D)\in\Spm\times\Spv$. 
Hence assume that there is a (not relabelled) subsequence $(\chi_k,D_k)_k\subset\Spm\times\Spv$ 
such that $(\chi_k,D_k)\to(\chi,D)$ in $L^\alpha(\Omega)\times  L^{q_2}(\Omega,\R^{d\times d})$. 
Upon extraction of a further subsequence that converges pointwise a.e.\ in $\Omega$ we find that 
the limit $(\chi,D)\in\Spm\times\Spv$. Moreover, thanks to the continuity \eqref{contiH} 
we have that $H(\chi_k,D_k)\to H(\chi,D)$ a.e.\ in $\Omega$ along this subsequence. In addition, 
for each $k\in\N,$  
the growth property \eqref{growthH} guarantees the convergent majorant 
 $C_H(|D|^{q_2}+1)\geq H(\chi_k,D_k),$  so that the convergence of the respective integral terms is 
implied by the dominated convergence theorem. Thus, altogether, we have verified the lower 
semicontinuity property stated in \eqref{Hlsc}.      
\par 
{\bf Ad \eqref{propsJ} \& \eqref{propsG}: }
Properties \eqref{domJ} and \eqref{domG} are implied by \eqref{defJ} and \eqref{defG}, respectively. 
The bounds \eqref{Jbdd} and \eqref{Gbdd} immediately follow from the growth properties 
\eqref{growthJ} and \eqref{growthG}. Invoking \cite[p.\ 492, Thm.\ 7.5]{FoLeo07}, 
the latter, together with the continuity \eqref{contiJ}, 
resp.\ \eqref{contiG} and the convexity property \eqref{convexJ}, resp.\ \eqref{convexG}, also ensure the 
weak sequential lower semicontinuity.  
\par 
The last statement of the lemma follows from \eqref{Jlsc} and \eqref{Glsc} as a direct 
consequence of Lemma \ref{Genwlsc}.   
\end{proof}
We are now in the position to conclude properties \eqref{CE} for $\calE$ 
in consequence of Lemmata \ref{ContXi}--\ref{PropsHJG}.  
\begin{coro}[Properties \eqref{CE} of $\calE$]
\label{BPropsE}
Let the functional $\calE:[0,T]\times\Sph\times\Spz\times\Spv\to\R_\infty$ 
be defined as in \eqref{defE}. Let the assumptions of Lemmata \ref{ContXi}--\ref{PropsHJG} 
hold true. 
Then, the functional $\calE$ satisfies properties \eqref{CE}.  
\end{coro}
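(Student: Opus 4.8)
The plan is to read off the two properties collected in \eqref{CE} from the additive structure
$\calE(t,\mathbf q)=\calW(t,\uu,\chi,D)+\calJ(\chi)+\calG(D)+\calH(\chi,D)$, combining the estimates already proved in Lemmata \ref{ContXi}--\ref{PropsHJG}. The uniform control of the power \eqref{E2} comes almost for free. Since $\calJ$, $\calG$ and $\calH$ are $t$-independent, one has $\partial_t\calE=\partial_t\calW$, and the $C^1$-regularity of $t\mapsto\calE(t,\mathbf q)$ follows from that of $t\mapsto\calW(t,\mathbf q)$ recorded in Lemma \ref{BPropsW}, which rests on the regularity \eqref{given} of $F$ and $e_\mathrm{D}$. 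For the bound I would invoke $|\partial_t\calW|\le C_5(\calW+\calG)+C_6$ from Lemma \ref{BPropsW}, and then use that $\calJ$ is bounded below by \eqref{Jbdd} and that $\calH\ge0$ by \eqref{Hbdd} to replace $\calW+\calG$ by $\calE$ up to an additive constant, i.e.\ $\calW+\calG\le\calE+C_\calJ$. This yields $|\partial_t\calE|\le c_1(c_0+\calE)$ as required by \eqref{E2}.

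The substantive part is the weak sequential compactness of sublevels \eqref{E1}, which I would split into coercivity and lower semicontinuity. Summing the lower bounds \eqref{Wbd}, \eqref{Jbdd}, \eqref{Gbdd} and the nonnegativity \eqref{Hbdd} gives, on $\{\calE(t,\mathbf q)\le E\}$, an estimate of the form
\[
E\ge c_W\|\uu\|_\Sph^2+c_\calJ\|\chi\|_\Spx^r+c_\calG\|D\|_\Spu^q-\tilde c_W\|D\|_{L^2(\Omega;\R^{d\times d})}^2-C .
\]
The only obstruction to coercivity is the sign-indefinite term $-\tilde c_W\|D\|_{L^2}^2$. I would absorb it by means of the second coercivity estimate in \eqref{Gbdd}, namely $\calG(D)\ge c_\calG\|D\|_{\Spu_1}^{q_1}-C_\calG$ with $q_1\ge2$: splitting $\calG$ into two halves, one half secures the $W^{1,q}$-coercivity while the other, via $L^{q_1}(\Omega)\hookrightarrow L^2(\Omega)$ (since $q_1\ge2$) and Young's inequality, dominates $\tilde c_W\|D\|_{L^2}^2$ up to a constant. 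Consequently the sublevels of $\calE(t,\cdot)$ are bounded in the reflexive space $\Sph\times\Spx\times(\Spu\cap\Spu_1)$.

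It then remains to check that $\calE(t,\cdot)$ is weakly sequentially lower semicontinuous. Given $\mathbf q_k$ in the sublevel, coercivity and reflexivity produce a subsequence with $\uu_k\rightharpoonup\uu$ in $\Sph$, $\chi_k\rightharpoonup\chi$ in $\Spx$ and $D_k\rightharpoonup D$ in $\Spu\cap\Spu_1$. The compact embeddings $\Spx\Subset L^\alpha(\Omega)$ (Rellich, using also $\chi_k\in[0,1]$ a.e.\ to reach every finite $\alpha$ by dominated convergence), $\Spu\Subset L^2(\Omega;\Sps)$ from \eqref{assq}, and $\Spu\Subset L^{q_2}(\Omega;\R^{d\times d})$ (valid because $q_2<q_*$ in \eqref{growthH}) upgrade these to exactly the strong topologies in which the lower semicontinuity statements \eqref{Wlsc} for $\calW$ and \eqref{Hlsc} for $\calH$ are phrased; meanwhile $\calJ$ and $\calG$ are weakly lower semicontinuous along the weak convergences in $\Spx$, resp.\ $\Spu\cap\Spu_1$, by \eqref{Jlsc} and \eqref{Glsc}. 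Adding the four $\liminf$-inequalities gives $\calE(t,\mathbf q)\le\liminf_k\calE(t,\mathbf q_k)\le E$, whence the limit lies in the sublevel and \eqref{E1} follows.

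I expect the delicate point to be precisely this absorption of the negative quadratic $L^2$-term from $\calW$ by the coercivity of $\calG$: the argument is transparent when $q_1>2$ (a pure Young step), but in the borderline case $q_1=2$ it forces a compatibility between the constant $\tilde c_W$, coming from the Korn and Young estimates in \eqref{Wbd}, and the coercivity constant $c_\calG$ in \eqref{Gbdd}. A secondary issue to handle carefully is matching each compact Sobolev embedding to the exact strong convergence required by the corresponding lower semicontinuity lemma, so that no term of $\calE$ is left without a valid $\liminf$-inequality.
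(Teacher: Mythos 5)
Your proposal is correct and follows essentially the same route as the paper: \eqref{E2} is obtained exactly as there, from the last statement of Lemma~\ref{BPropsW} by adding the nonnegative quantities $\calH$ and $\calJ+C_\calJ$, and the coercivity in \eqref{E1} is obtained by the paper's own device of absorbing $-\tilde c_W\|D\|_{L^2(\Omega;\R^{d\times d})}^2$ into the $\Spu_1$-coercivity of $\calG$ from \eqref{Gbdd} via H\"older's and Young's inequalities, yielding boundedness of sublevels in the reflexive space $\Sph\times\Spx\times(\Spu\cap\Spu_1)$. Your two closing concerns are well placed but do not change the verdict: the explicit lower-semicontinuity step (compact embeddings plus \eqref{Wlsc}, \eqref{Hlsc}, \eqref{Jlsc}, \eqref{Glsc}) spells out the weak sequential closedness of the sublevels that the paper's proof leaves implicit in the phrase ``bounded in a reflexive space, hence weakly sequentially compact,'' and the borderline case $q_1=2$ is glossed over by the paper as well, since its Young-inequality step asserts $2/q_1<1$, i.e.\ tacitly reads \eqref{growthG} as $q_1>2$.
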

\begin{proof}
We split the proof in several steps.
\par 
{\bf Ad compactness of the sublevels \eqref{E1}: }
Comparing \eqref{Wbd} with \eqref{Gbdd}, using H\"older's and Young's inequality, 
we first deduce for $D$: 
\begin{equation*}
\begin{aligned}
-\bar c_W\|D\|_{L^2(\Omega,\R^{d\times d})}^2 & 
\geq-\|D\|_{\Spu_1}^2\bar c_W \calL^d(\Omega)^{(q_1-2)/q_1}
\\
& 
\geq-\tfrac{2c_\calG}{q_1}\|D\|_{\Spu_1}^{q_1}
-\tfrac{q_1-2}{q_1}\big(c_\calG^{-2/q_1}\bar c_W\calL^d(\Omega)^{(q_1-2)/q_1}\big)^{(q_1-2)/q_1}\,,
\end{aligned}
\end{equation*}
where $2/q_1<1$ according to \eqref{growthG}. 
Thus, combining bounds \eqref{Wbd}, \eqref{Hbdd}, \eqref{Jbdd}, and \eqref{Gbdd} 
yields that $\calE$ has sublevels bounded in $\Sph \times \Spx \times (\Spu \cap \Spu_1)$.  
Since this space is reflexive, 
the sublevels are then  sequentially  weakly  compact, and so they are 
 in $\calQ = \Sph \times \Spz \times \Spv$.  
 %
\par 
{\bf Ad uniform control of the power \eqref{E2}: }
Since $\partial_t\calE(t,\uu,\chi,D)=\partial_t\calW(t,\uu,\chi,D),$ given by \eqref{dtW}, 
the last statement of Lemma \ref{BPropsW} ensures \eqref{E2} for $\calE$ upon adding $\calH(\chi,D)>0$ 
and $\calJ(\chi)+C_\calJ >0 $.     
\end{proof}
\par 
Finally, we verify that the rate-independent system $(\calQ,\calE,\calR)$ for plasticity and damage 
satisfies the compatibility conditions \eqref{CC}.
\begin{prop}[Compatibility conditions \eqref{CC}] 
Let the assumptions of Theorem \ref{EnSolanisoEx} hold true. 
Then the rate-independent system $(\calQ,\calE,\calR)$ for plasticity and damage 
satisfies the compatibility conditions \eqref{CC}. 
\end{prop}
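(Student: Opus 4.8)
The plan is to verify the two conditions in \eqref{CC} for an arbitrary stable sequence $(t_k,\mathbf{q}_k)_k$, i.e.\ one with $t_k\to t$, $\mathbf{q}_k\rightharpoonup\mathbf{q}$ in $\calQ$, $\mathbf{q}_k\in\calS(t_k)$ and $\sup_k\calE(t_k,\mathbf{q}_k)<\infty$. First I would extract the compactness the problem provides: by the coercivity established for \eqref{E1} in Corollary \ref{BPropsE}, the energy bound forces $(\uu_k,\chi_k,D_k)$ to be bounded in $\Sph\times\Spx\times(\Spu\cap\Spu_1)$. Hence, along a (non-relabelled) subsequence, $\uu_k\rightharpoonup\uu$ in $\Sph$, while the compact embeddings $\Spx\Subset L^\alpha(\Omega)$ and $\Spu\Subset L^2(\Omega;\Sps)$ (cf.\ \eqref{assq}) yield $\chi_k\to\chi$ in $L^\alpha(\Omega)$ (any $\alpha\in[1,\infty)$) and $D_k\to D$ in $L^2(\Omega;\Sps)$, together with a.e.\ convergence. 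Since the limits are uniquely identified, these convergences hold for the whole sequence.

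For condition \eqref{CCa} I would start from the explicit formula \eqref{dtW} for $\partial_t\calE=\partial_t\calW$ and pass to the limit term by term, exploiting weak--strong pairings. Indeed $e(\uu_k)\rightharpoonup e(\uu)$ in $L^2$, whereas $\mathbb{K}(\chi_k):\partial_t e_\mathrm{D}(t_k)\to\mathbb{K}(\chi):\partial_t e_\mathrm{D}(t)$ strongly in $L^2$, using the a.e.\ convergence and boundedness of $\mathbb{K}$ together with $\uu_\mathrm{D}\in C^1([0,T];\Sph)$ from \eqref{given}; moreover $e_\mathrm{D}(t_k)\to e_\mathrm{D}(t)$ strongly, and $\Xi(\chi_k,D_k)\to\Xi(\chi,D)$ strongly in $L^2$ by Lemma \ref{ContXi}. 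The forcing term converges since $F_t(t_k)\to F_t(t)$ in $\Sph^*$ while $\uu_k\rightharpoonup\uu$. This gives $\partial_t\calE(t_k,\mathbf{q}_k)\to\partial_t\calE(t,\mathbf{q})$.

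The core of the proof is the closedness \eqref{CCb}, which I would obtain by constructing a mutual recovery sequence. Fix a competitor $\hat{\mathbf{q}}=(\hat\uu,\hat\chi,\hat D)\in\calQ$; we may assume $\calR(\hat{\mathbf{q}}-\mathbf{q})<\infty$, which by \eqref{defR} forces $\hat\chi\le\chi$ a.e.\ and $\hat D\in\mathrm{dom}\,\calG$, as otherwise the stability inequality \eqref{gS} for $\mathbf{q}$ is trivial. To respect the unidirectional constraint encoded in $\calR_\mathrm{dam}$, I set $\hat{\mathbf{q}}_k:=(\hat\uu,\hat\chi_k,\hat D)$ with $\hat\chi_k:=\min\{\hat\chi,\chi_k\}$, keeping the non-dissipative and the unconstrained components fixed. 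The lattice property of $W^{1,r}$ gives $\hat\chi_k\in\Spx\cap\Spm$, and $\hat\chi_k\le\chi_k$ ensures $\calR(\hat{\mathbf{q}}_k-\mathbf{q}_k)<\infty$; moreover $\hat\chi_k\to\hat\chi$ in every $L^\alpha(\Omega)$ by dominated convergence. Since $\hat\chi_k-\chi_k=-(\chi_k-\hat\chi)^+\to-(\chi-\hat\chi)$ in $L^1$ and $D_k\to D$ strongly in $L^1$, the dissipation converges, $\calR(\hat{\mathbf{q}}_k-\mathbf{q}_k)\to\calR(\hat{\mathbf{q}}-\mathbf{q})$.

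It then remains to pass to the limit in the discrete stability $0\le\calE(t_k,\hat{\mathbf{q}}_k)-\calE(t_k,\mathbf{q}_k)+\calR(\hat{\mathbf{q}}_k-\mathbf{q}_k)$. The contributions of $\calW$, $\calG$ and $\calH$ are controlled by combining the continuity of the $\hat{\mathbf{q}}_k$-part (Lemmata \ref{ContXi}, \ref{BPropsW}, \ref{PropsHJG}, and the time-continuity \eqref{L}) with the weak lower semicontinuity of the $\mathbf{q}_k$-part, which bounds $\limsup_k$ of each difference from above by the corresponding difference at the limit. The delicate term is $\calJ(\hat\chi_k)-\calJ(\chi_k)$: here the choice $\hat\chi_k=\min\{\hat\chi,\chi_k\}$ is essential, because on $\{\hat\chi>\chi_k\}$ one has $\hat\chi_k=\chi_k$, so the a priori uncontrolled gradient contributions $\tilde J(\chi_k,\nabla\chi_k)$ cancel exactly and leave $\calJ(\hat\chi_k)-\calJ(\chi_k)=\int_{\{\hat\chi\le\chi_k\}}\big(\tilde J(\hat\chi,\nabla\hat\chi)-\tilde J(\chi_k,\nabla\chi_k)\big)\,\mathrm{d}x$. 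I expect this cancellation, combined with a localized lower-semicontinuity estimate for $\int\tilde J(\chi_k,\nabla\chi_k)$ (using the convexity \eqref{convexJ}, the lower bound in \eqref{growthJ}, an Egorov argument on the sets $\{\hat\chi\le\chi-\epsilon\}$, which are eventually contained in $\{\hat\chi\le\chi_k\}$, and the a.e.\ identity $\nabla\hat\chi=\nabla\chi$ on $\{\hat\chi=\chi\}$), to be the main obstacle; it yields $\liminf_k\int_{\{\hat\chi\le\chi_k\}}\tilde J(\chi_k,\nabla\chi_k)\ge\int_\Omega\tilde J(\chi,\nabla\chi)$ and hence $\limsup_k[\calJ(\hat\chi_k)-\calJ(\chi_k)]\le\calJ(\hat\chi)-\calJ(\chi)$. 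Collecting these estimates proves the mutual recovery inequality; since the bracket is nonnegative for every $k$, its limsup is nonnegative, whence the limit inequality $\calE(t,\mathbf{q})\le\calE(t,\hat{\mathbf{q}})+\calR(\hat{\mathbf{q}}-\mathbf{q})$ holds for all $\hat{\mathbf{q}}$, i.e.\ $\mathbf{q}\in\calS(t)$.
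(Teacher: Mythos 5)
Your proof follows the same architecture as the paper's: identical compactness step, identical weak--strong argument for \eqref{CCa}, and closedness \eqref{CCb} via a mutual recovery sequence that freezes $\hat{\uu}$ and $\hat D$ and modifies only the damage component. The one substantive difference is the damage recovery sequence itself: you take the plain truncation $\hat\chi_k=\min\{\hat\chi,\chi_k\}$, whereas the paper (following \cite{ThoMie09DNEM}, and quoting \cite[Thm.\ 4.5]{LRTT14RDTM} for \eqref{propmrschi}) uses the shifted truncation $\hat\chi_k=\min\{\chi_k,\max\{0,\hat\chi-\delta_k\}\}$ with $\delta_k\to0$ calibrated to the distance between $\chi_k$ and $\chi$. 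That shift is not a technicality, and its absence is exactly where your argument has a genuine gap.

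The inequality on which you rest the key step, namely $\liminf_k\int_{\{\hat\chi\le\chi_k\}}\tilde J(\chi_k,\nabla\chi_k)\,\mathrm{d}x\ge\int_\Omega\tilde J(\chi,\nabla\chi)\,\mathrm{d}x$, is false in general. Your Egorov/containment argument only controls the region $\{\hat\chi<\chi\}$, where $\{\hat\chi\le\chi-\epsilon\}$ is eventually contained in $\{\hat\chi\le\chi_k\}$; on the contact set $\{\hat\chi=\chi\}$ --- which in damage problems typically has positive, even full, measure, since a competitor need only increase damage somewhere --- the sets $\{\hat\chi\le\chi_k\}=\{\chi\le\chi_k\}$ oscillate with $k$ and no containment holds. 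Concretely, take $d=1$, $\Omega=(0,1)$, $\tilde J(\chi,A)=|A|^r$, $\hat\chi=\chi$ with $\chi(x)=\tfrac12+\tfrac{x}{4}$, and $\chi_k=\chi-\tfrac1k|\sin(k\pi x)|$: then $\chi_k\in\Spm\cap\Spx$, $\chi_k\rightharpoonup\chi$ in $\Spx$, but $\{\hat\chi\le\chi_k\}=\{\sin(k\pi x)=0\}$ is a null set, so your left-hand side is $0$ while the right-hand side equals $(1/4)^r>0$. (In this example the recovery inequality itself survives, because there $\hat\chi_k=\chi_k$; the failure is in your justification, and your argument nowhere uses stability of the $\chi_k$, so it cannot be repaired without new input.) Two repairs are available: (i) the paper's $\delta_k$-shift, whose purpose is precisely to make the bad set $\{\max\{0,\hat\chi-\delta_k\}>\chi_k\}\subset\{|\chi-\chi_k|>\delta_k\}$ of vanishing measure, so that the problematic lower semicontinuity on varying domains reduces to lower semicontinuity on the fixed domain $\Omega$ plus an error controlled via the lower bound in \eqref{growthJ}; or (ii) keeping your plain min, a finer contact-set argument: a.e.\ on $\{\hat\chi=\chi\}$ one has $1_{\{\chi\le\chi_k\}}\big(\nabla\chi_k-\nabla\chi\big)=\nabla\big((\chi_k-\chi)^+\big)$, with $(\chi_k-\chi)^+\rightharpoonup0$ in $\Spx$, and the convexity \eqref{convexJ} used through a subgradient inequality (with strongly converging subgradients, which needs slightly more than the bare assumptions \eqref{propJ}) then yields the missing liminf estimate. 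Neither ingredient appears in your sketch, so as written the proof of \eqref{CCb} is incomplete.
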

\begin{proof} 
In view of \eqref{L}   
 we infer for any stable sequences 
$(t_k,\uu_k,\chi_k,D_k)_k\subset\calS(t)$ that there is a constant $E>0$ such that 
this sequence belongs to same the energy sublevel $L_E(t)$, 
which is bounded in $\Sph \times \Spx \times (\Spu \cap \Spu_1)$
as  guaranteed by  
Cor.\ \ref{BPropsE}. Hence,  
we deduce 
the following convergence properties  along a (not relabelled) subsequence: 
\begin{subequations}
\label{conv}
\begin{eqnarray}
\uu_k\rightharpoonup\uu&&\text{in }\Sph\,,\\
D_k\rightharpoonup D&&\text{in }\Spu\,,\\
D_k\to D&&\text{in } L^{q_1}(\Omega,\R^{d\times d})\cap L^{q_2}(\Omega,\R^{d\times d})\,,\\
\chi_k\rightharpoonup\chi&&\text{in }\Spx\,,\\
\chi_k\to\chi&&\text{in }L^\alpha(\Omega)\text{ for any }\alpha\in[1,\infty)\,.
\end{eqnarray}
\end{subequations}    

{\bf Ad \eqref{CCa} convergence of the power $\partial_t\calE(t_k,q_k)$: }
In view of the above convergences, property 
\eqref{CCa} can be concluded from weak-strong 
convergence arguments using that $e_\mathrm{D}(t_k)\to e_\mathrm{D}(t)$ strongly in $\Sph$ and 
$F(t_k)\to F(t)$ strongly in $\Sph^*$ thanks to the regularity assumptions \eqref{given}.   

{\bf Ad closedness of sets of stable states \eqref{CCb}: } 
In order to deduce \eqref{CCb}, we make use of the so-called mutual recovery condition, i.e.\ 
\emph{for every sequence $(\uu_k,\chi_k,D_k)_k\subset\calS(t)$ converging to a limit 
$(\uu,\chi,D)$ in the sense of \eqref{conv}, and any competitor $(\hat{\uu},\hat\chi,\hat D),$ 
it must be possible to construct a mutual recovery sequence $(\hat{\uu}_k,\hat\chi_k,\hat D)_k$ 
such that 
\begin{equation}
\label{mrs-condition}
\limsup_{k\to\infty}\big(\calE(t,\hat q_k)-\calE(t,q_k)+\calR(\hat q_k-q_k)\big)
\leq \calE(t,\hat q)-\calE(t,q)+\calR(\hat q-q)\,,
\end{equation}
where we again abbreviated $\hat q_k=(\hat{\uu}_k,\hat\chi_k,\hat D_k),$ etc.. }
\par
Let $\hat q=(\hat{\uu},\hat\chi,\hat D)$ such that $\calE(t,\hat q)<\infty$.  
Then, a suitable recovery sequence is defined by 
\begin{subequations}
\label{mrs}
\begin{eqnarray}
\label{mrsu}
\hat{\uu}_k&:=&\hat{\uu}\,,\\
\label{mrschi}
\hat\chi_k&:=&\min\{\chi_k,\max\{0,\hat\chi-\delta_k\}\}\,,\\
\label{mrsD}
\hat D_k&:=&\hat D\,,
\end{eqnarray}
\end{subequations}
where $\delta_k$ in \eqref{mrschi} is suitably chosen in dependence of $\|\chi_k-\chi\|_\Spx$ 
such that $\delta_k\to0$ as $k\to\infty,$ see \cite{ThoMie09DNEM} for the details. 
We refer to \cite[Thm.\ 4.5]{LRTT14RDTM} for the proof of the following convergence property:   
\begin{equation}
\label{propmrschi}
\hat\chi_k\rightharpoonup\hat\chi\;\text{ in }\Spx
\;\text{ as well as }\;
\limsup_{k\to\infty}\big(
\calJ(\hat\chi_k)-\calJ(\chi_k)
\big)
\leq \calJ(\hat\chi)-\calJ(\chi)\,.
\end{equation}
The convergence stated in \eqref{propmrschi} together with \eqref{mrsD} 
yields that 
$\calR(\hat\chi_k-\chi_k,\hat D_k-D_k)
\to \calR(\hat\chi-\chi,\hat D-D)$. 
Moreover, upon choosing a further subsequence $(\hat{\uu}_k,\hat\chi_k,\hat D_k)_k$, 
which converges pointwise a.e.\ in $\Omega,$ and by making use of the bounds \eqref{given}, 
we may conclude via the dominated convergence theorem, also taking into 
account the growth properties  \eqref{growthH} of $\calH$,
that  
\begin{equation*}
\calW(t, \hat{\uu}_k, \hat\chi_k,\hat D_k) \to \calW(t,\hat \uu, \hat\chi, \hat D) \text{ and } 
\calH(\hat\chi_k, \hat{D}_k)\to  \calH(\hat\chi, \hat{D}),
\end{equation*}
whereas we clearly have $\calG (\hat{D}_k) \to \calG (\hat D)$. 
The respective expressions for $(\uu_k,\chi_k,D_k)$ can be handled by weak lower semicontinuity.
Ultimately, we conclude \eqref{mrs-condition}, which finishes the proof. 
\end{proof}
\bibliographystyle{alpha}
\bibliography{marita_lit.bib,anisotr_dam_biblio.bib}
\end{document}